\documentclass[12pt,a4paper]{amsart}

\setlength{\textheight}{8.50in} \setlength{\oddsidemargin}{0.00in}
\setlength{\evensidemargin}{0.00in} \setlength{\textwidth}{6.08in}
\setlength{\topmargin}{0.00in} \setlength{\headheight}{0.18in}
\setlength{\marginparwidth}{1.0in}
\setlength{\abovedisplayskip}{0.2in}
\setlength{\belowdisplayskip}{0.2in}
\setlength{\parskip}{0.05in}

\usepackage[utf8]{inputenc}
\usepackage[T1]{fontenc}
\usepackage{microtype}
\usepackage{lmodern}
\usepackage{standard}
\usepackage{mathtools}
\usepackage{amsrefs}
\usepackage{enumerate}
\usepackage[usenames,dvipsnames]{xcolor}
\usepackage[colorlinks=true,linkcolor=Red,citecolor=Green]{hyperref}

\usepackage{tikz}
\usetikzlibrary{decorations.markings}

\author[S. Coriasco]{Sandro Coriasco}
\address{Dipartimento di Matematica ``G. Peano''\newline\indent Universit\`a degli Studi di Torino\newline\indent V. C. Alberto, n. 10, I-10126 Torino, Italy}
\email{sandro.coriasco[AT]unito.it}

\author[M. Doll]{Moritz Doll}
\address{Department 3 -- Mathematics\newline\indent University of Bremen\newline\indent Bibliotheksstr. 5, D-28359 Bremen, Germany}
\email{doll[AT]uni-bremen.de}

\title{Weyl Law on Asymptotically Euclidean Manifolds}
\date{\today}

\begin{document}

\begin{abstract}
    We study the asymptotic behaviour of the eigenvalue counting function
    for self-adjoint elliptic linear operators defined through classical weighted symbols of order $(1,1)$,
    on an asymptotically Euclidean manifold.
    We first prove a two term Weyl formula, improving previously known remainder estimates.
    Subsequently, we show that under a geometric assumption on the Hamiltonian flow at infinity there is a 
    refined Weyl asymptotics with three terms.
    The proof of the theorem uses a careful analysis of the flow behaviour in the corner component of the 
    boundary of the double compactification of the cotangent bundle.
    Finally, we illustrate the results by analysing the operator $Q=(1+|x|^2)(1-\Delta)$ on $\RR^d$.
\end{abstract}

\maketitle

\section{Introduction}
Let $(X,g)$ be a $d$-dimensional asymptotically Euclidean manifold.
On $X$ we consider a self-adjoint positive operator $P$, elliptic in the $\SG$-calculus of order $(m,n)$ with $m, n\in (0,\infty)$.
By the compact embedding of weighted Sobolev spaces, the resolvent is compact and hence the spectrum of $P$ consists of a sequence of eigenvalues
\begin{align*}
    0 < \lambda_1 \leq \lambda_2 \leq \dotsc \to +\infty.
\end{align*}
The goal of this article is to study the Weyl law of $P$, that is, the asymptotics of its counting function,
\begin{equation}\label{eq:cf}
	N(\lambda)= \#\{ j \colon \lambda_j < \lambda\}.
\end{equation}

Hörmander~\cite{Hormander68} proved, for a positive elliptic self-adjoint classical pseudodifferential operator of order $m > 0$
on a compact manifold, the Weyl law
\[
	N(\lambda)=\gamma\cdot \lambda^\frac{d}{m}+O(\lambda^\frac{d-1}{m}), \quad \lambda\to+\infty.
\]
It was pointed out that, in 
general, this is the \emph{sharp remainder estimate}, since the exponent of $\lambda$ in the remainder term
cannot be improved for the Laplacian on the sphere.
It was subsequently shown by Duistermaat and Guillemin~\cite{DuGu75} that under a geometric assumption
there appears an additional term $\gamma' \lambda^{(d-1)/m}$ and 
the remainder term becomes $o(\lambda^{(d-1)/m})$.

In the case of $\SG$-operators on manifolds with ends, the leading order of the Weyl asymptotics was found by 
Maniccia and Panarese~\cite{MaPa02}.
Battisti and Coriasco~\cite{BaCo11} improved the remainder estimate to $O(\lambda^{d/\max\{m,n\} - \ep})$ for some $\ep > 0$.
For $m \not = n$, Coriasco and Maniccia~\cite{CoMa13} proved the general sharp remainder estimate.

In Theorem~\ref{thm:sharp-weyl}, we prove the analogue of Hörmander's result for $m = n$.
This provides a more precise remainder term compared to the earlier result given in \cite{BaCo11}.
If the \emph{geodesic flow at infinity} generated by the \emph{corner component} $p_\psie$ 
of the principal symbol of $P$ is sufficiently generic, we have an even more refined estimate, parallel to the Duistermaat--Guillemin theorem, described in Theorem~\ref{thm:refined}.

\begin{theorem}\label{thm:sharp-weyl}
    Let $P \in \calcsg^{m,m}_\cl(X)$ be a self-adjoint, positive, elliptic $\SG$-classical 
    pseudodifferential operator on an asymptotically Euclidean manifold $X$, and $N(\lambda)$ 
    its associated counting function. Then, the corresponding Weyl asymptotics reads as
    \begin{align*}
        N(\lambda) = \gamma_2 \lambda^{\frac dm}\log\lambda + \gamma_1 \lambda^{\frac dm}+ O(\lambda^{\frac{d-1}m}\log\lambda).
    \end{align*}
    The coefficients $\gamma_j$, $j=1,2$, are given by 
    \begin{align*}
        \gamma_2 &= \frac{\TR(P^{-\frac{d}{m}})}{m\cdot d},
        \\
        \gamma_1 &= \frac{\wTR_{x,\xi}(P^{-\frac dm})}{d} - \frac{\TR(P^{-\frac dm})}{d^2},
    \end{align*}
    where $\TR$ and $\wTR_{x,\xi}$ are suitable trace operators on the algebra
    of $\SG$-operators on $X$.
\end{theorem}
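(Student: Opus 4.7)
The strategy follows the Hörmander--Duistermaat--Guillemin scheme, adapted to the $\SG$-calculus as in~\cite{CoMa13}, with substantial modifications to handle the \emph{confluent} case $m=n$. Setting $Q = P^{1/m}$, which is a self-adjoint positive $\SG$-classical operator of order $(1,1)$, I would first construct a short-time $\SG$-Fourier integral operator representation of the unitary group $U(t) = e^{-itQ}$. For $|t|$ small, the smoothed spectral object $\sigma(t) = \TR(\chi(Q) U(t))$, with a suitable cutoff $\chi$, is then realised by oscillatory integrals whose phase solves the eikonal equation for the principal symbol of $Q$ in all three components --- base, fibre and corner --- of the doubly-compactified cotangent bundle.

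To identify the coefficients, I would bring in the spectral zeta function $\zeta_P(s) = \TR(P^{-s})$, absolutely convergent for $\operatorname{Re} s > d/m$. A Mellin analysis of the $\SG$-symbol of $P^{-s}$ shows that $\zeta_P$ extends meromorphically to $\mathbb{C}$, and when $m=n$ the contributions of the base and fibre boundaries collide at $s=d/m$ to produce a pole of order two. The top Laurent coefficient, essentially $\TR(P^{-d/m})/m$, delivers $\gamma_2$, while the subleading one combines $\wTR_{x,\xi}(P^{-d/m})$ with a correction proportional to $\TR(P^{-d/m})$ and reproduces $\gamma_1$ in the stated form. Matching these residues against the stationary-phase expansion of $\sigma(t)$ at $t=0$ then fixes the universal constants.

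The final step is a Fourier Tauberian argument applied to $\sigma$. Polynomial a priori bounds on $\sigma(t)$ for $t$ in a bounded interval, coming from the unitarity of $U(t)$ together with the compactness of the resolvent, are combined with the short-time singular expansion and a Safarov--Vassiliev-type inversion theorem adapted to a double-pole source, to produce
\begin{align*}
    N(\lambda) = \gamma_2 \lambda^{d/m}\log\lambda + \gamma_1 \lambda^{d/m} + O(\lambda^{(d-1)/m}\log\lambda).
\end{align*}

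The principal obstacle is the propagation of the logarithm through the Tauberian inversion. In the case $m\neq n$ treated in~\cite{CoMa13}, the simple pole of $\zeta_P$ at $s=d/\max\{m,n\}$ yields a clean $\lambda^{(d-1)/m}$ remainder, whereas the double pole here a priori introduces logarithmic losses at each step. Controlling them so that the final remainder carries only one $\log\lambda$ factor, rather than $(\log\lambda)^2$ or worse, requires a careful symbolic splitting at the corner component of the compactified cotangent bundle and a precise accounting of the subleading $\SG$-stationary phase terms --- effectively a fine comparison between $\TR$ and $\wTR_{x,\xi}$ that isolates the genuine two-term contribution.
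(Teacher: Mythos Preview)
Your proposal follows essentially the same route as the paper: reduce to order $(1,1)$ via $Q=P^{1/m}$, build the $\SG$-FIO parametrix for $e^{-itQ}$, extract the short-time wave-trace expansion, identify the leading coefficients through the Laurent expansion of $\zeta_P$ at $s=d/m$, and close with a Safarov--Vassiliev-type Tauberian theorem. Two small recalibrations are worth noting. First, the paper does not obtain the logarithmic structure of the trace by stationary phase; instead it writes the localized trace as a parametric integral $\tilde A_B(t,\lambda)$ over a sublevel set of the phase and invokes the Push-Forward Theorem (Melrose, Grieser--Gruber), which immediately gives log-polyhomogeneity of degree one in $\log\lambda$ because exactly two boundary faces meet at the corner. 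Second, your ``principal obstacle'' is somewhat overstated: once the Tauberian theorem is formulated directly for bounds of the form $\lambda^\nu\log\lambda$ (a one-line adaptation of the standard proof), the remainder $O(\lambda^{(d-1)/m}\log\lambda)$ falls out with no risk of picking up $(\log\lambda)^2$, and no symbolic splitting at the corner is required. The microlocal partition near $\Wpe$ that you anticipate is precisely the device used for the \emph{refined} estimate of Theorem~\ref{thm:refined}, not for Theorem~\ref{thm:sharp-weyl} itself.
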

\begin{theorem}\label{thm:refined}
    Let $P \in \calcsg^{m,m}_\cl(X)$ and $N(\lambda)$ be as in Theorem \ref{thm:sharp-weyl} above.
    Denote by $p_\psie$ the corner component of the principal symbol of $P$.
    If the set of periodic orbits of the Hamiltonian flow of $\hamvf_{f}$, $f=(p_\psie)^\frac{1}{m}$,
    has measure zero on $\Wpe$, then we have the estimate
    \begin{align*}
        N(\lambda) = \gamma_2 \lambda^{\frac dm}\log\lambda + \gamma_1 \lambda^\frac{d}{m} + 
        \gamma_0 \lambda^{\frac{d-1}{m}}\log\lambda + o(\lambda^{\frac{d-1}{m}}\log\lambda),
    \end{align*}
    with the coefficients $\gamma_2$ and $\gamma_1$ given in Theorem \ref{thm:sharp-weyl}, and
    \[\gamma_0 = \dfrac{\TR(P^{-\frac{d+1}m})}{m\cdot (d-1)}.\]
\end{theorem}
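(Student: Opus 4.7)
The plan is to follow the Duistermaat--Guillemin strategy, adapted to the $\SG$-calculus framework already developed in the proof of Theorem~\ref{thm:sharp-weyl}. The starting point is the trace formula for the half-wave group $U(t) = e^{-itP^{1/m}}$, or more precisely for a regularised spectral measure $\rho * dN$, where $\hat{\rho}$ is a compactly supported cutoff near $t=0$. As in the sharp case, the large-$\lambda$ asymptotics of $\rho * dN$ are converted into asymptotics for $N(\lambda)$ itself by a Fourier--Tauberian theorem; the improvement here is that the width of $\operatorname{supp}\hat\rho$ can be enlarged to any finite interval, thanks to the dynamical hypothesis.

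First, I would decompose the analysis according to the three boundary faces of the doubly-compactified cotangent bundle: the fibre-infinity face, the spatial-infinity face, and the corner face $\Wpe$. The leading terms $\gamma_2\lambda^{d/m}\log\lambda$ and $\gamma_1\lambda^{d/m}$ carry over unchanged from Theorem~\ref{thm:sharp-weyl}, since they arise from a microlocal expansion of the kernel of $U(t)$ at $t=0$ and do not see the global behaviour of any Hamiltonian flow. The new coefficient $\gamma_0$, at scale $\lambda^{(d-1)/m}\log\lambda$, must therefore come from the trace of $U(t)$ localised near the corner $\Wpe$, where the Hamiltonian dynamics of $\hamvf_f$ with $f=(p_\psie)^{1/m}$ become relevant.

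Second, I would pass from an $O$-remainder to an $o$-remainder on each face by enlarging $\operatorname{supp}\hat\rho$ and invoking a strong Fourier--Tauberian theorem of Safarov type. The key point is to show that, after microlocalisation near the corner, the only contributions to $\operatorname{tr}(\rho(t)U(t))$ for $t$ in a bounded interval away from $0$ come from times corresponding to periods of closed trajectories of $\hamvf_f$ on $\Wpe$. A non-stationary-phase argument within the $\SG$-Fourier integral calculus reduces matters to this periodic set; the hypothesis that it has measure zero in $\Wpe$ then forces the contribution to vanish in the mean, yielding $o$ rather than $O$ after Tauberian inversion. The two non-corner faces are handled analogously, but there the relevant flow is already controlled (it is essentially the geodesic/radial flow of $g$), so only the corner case is delicate.

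The main obstacle will be the fine propagation-of-singularities analysis in the corner regime: the flow of $\hamvf_f$ does not live on a compact contact manifold as in the classical Duistermaat--Guillemin setting, so describing its periodic set as a measurable subset of $\Wpe$ and controlling the uniformity of the clean-intersection estimates demands a careful use of the blow-up geometry of the doubly compactified cotangent bundle. Once the singular support of $\operatorname{tr} U(t)$ has been localised away from this measure-zero periodic set, identifying the explicit value $\gamma_0 = \TR(P^{-(d+1)/m})/(m(d-1))$ should follow from a Mellin/residue computation, matching the logarithmic singularity of the local Weyl density on $\Wpe$ against the appropriate Laurent coefficient of the spectral zeta function $\TR(P^{-s})$ at the shifted pole $s=(d-1)/m$.
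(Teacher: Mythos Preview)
Your overall strategy matches the paper's: reduce to order $(1,1)$, microlocalise via a partition of unity subordinate to the three faces $\Wp$, $\We$, $\Wpe$, control the corner contribution by enlarging $\supp\hat\rho$ up to the microlocal return time and applying a Tauberian theorem, then let the partition shrink so that the resulting upper Riemann sum converges to $\int_{\Wpe}\tilde\Pi^{-1}\,dS/p_\psie$, which vanishes under the measure-zero hypothesis. The identification of $\gamma_0$ via the Laurent expansion of $\zeta(s)$ at $s=d-1$ is also exactly what the paper does.

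There is, however, a genuine gap in your treatment of the non-corner faces. You write that $\Wp$ and $\We$ are ``handled analogously'' because ``the relevant flow is already controlled''. This is not correct: the Hamiltonian flow of $p_\psi$ on $\Wp$ is the ordinary bicharacteristic flow over the interior, and nothing in the hypotheses prevents it from being fully periodic (think of a radially homogeneous symbol whose $\psi$-flow is the geodesic flow of a Zoll metric). The theorem assumes a measure-zero periodic set \emph{only} on $\Wpe$, so you cannot invoke a dynamical argument on the other faces. The reason those faces are harmless is structural, not dynamical: after microlocalising with $A_j^\psi\in\Op\SG^{0,-\infty}$ (respectively $A_j^e\in\Op\SG^{-\infty,0}$), the local counting function $N_j^\psi$ (respectively $N_j^e$) carries \emph{no} $\log\lambda$ term at all, and the classical H\"ormander (respectively Coriasco--Maniccia) sharp estimate already gives
\[
N_j^\bullet(\lambda)-(N_j^\bullet*\rho)(\lambda)=O(\lambda^{d-1}),\qquad \bullet\in\{\psi,e\},
\]
which is automatically $o(\lambda^{d-1}\log\lambda)$. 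This is precisely why the refinement is governed solely by the corner flow. Once you replace your ``analogous flow control'' on $\Wp$ and $\We$ by this observation, the rest of your outline goes through and coincides with the paper's proof.
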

\begin{remark}
    The trace operators $\TR$ and $\wTR_{x,\xi}$ appearing in Theorems \ref{thm:sharp-weyl} and \ref{thm:refined}
    were introduced in \cite{BaCo11}.
    The coefficient $\gamma_0$ can be calculated as the Laurent coefficient of order $-2$ at $s = d-1$ of $\zeta(s)$,
    the spectral $\zeta$-function associated with $P$.
\end{remark}
\begin{remark}
    To our best knowledge, this is the first result of a logarithmic Weyl law with the remainder being one order lower than the 
    leading term (we refer to \cite{BaCo11} for other settings with logarithmic Weyl laws).
\end{remark}
Next, we apply our results to
the model operator $P$ associated with the symbol $p(x,\xi)=\ang{x}\!\cdot\!\ang{\xi}$, $\ang{z}=\sqrt{1+|z|^2}$, $z\in\RR^d$,
that is, $P=\ang{\cdot}\sqrt{1-\Delta}$. In particular, we observe that the condition on the underlying Hamiltonian flow in
Theorem \ref{thm:refined} is not satisfied, and compute explicitly the coefficients $\gamma_1$ and $\gamma_2$. 
\begin{theorem}\label{thm:explicit-example}
    Let $P = \ang{\cdot} \ang{D} \in \calcsg^{1,1}(\RR^d)$. Then,
    \begin{align*}
        N(\lambda) = \gamma_2 \lambda^d\log\lambda + \gamma_1 \lambda^d + O(\lambda^{d-1}\log\lambda).
    \end{align*}
    Here, the coefficients are
    \begin{align*}
        \gamma_2 &= \frac{[\vol(\Sph^{d-1})]^2}{(2\pi)^{d}} \cdot \frac{1}{d} ,
        \\
        \gamma_1 &= \frac{[\vol(\Sph^{d-1})]^2}{(2\pi)^{d}} \cdot \!\left[\Psi\!\left(\frac{d}{2}\right)+\gamma-\frac{1}{d^2}\right]\!,
    \end{align*}
    where $\displaystyle\gamma=\lim_{n\to+\infty}\left(\sum_{k=1}^n\frac{1}{k}-\log n\right)$ is the 
    Euler-Mascheroni constant and \[\Psi(x)=\dfrac{d}{dx}\log\Gamma(x)\] is the digamma function.
\end{theorem}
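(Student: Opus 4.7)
The plan is as follows. First I verify that the periodicity hypothesis of Theorem \ref{thm:refined} fails, which precludes a refined three-term asymptotics with $o$-remainder. Writing $x=r\omega$ and $\xi=\rho\eta$ with $\omega,\eta\in\Sph^{d-1}$ and $r,\rho>0$, the corner principal symbol is $p_\psie = |x||\xi| = r\rho$ and, with $m=1$, the Hamilton equations for $f=p_\psie$ read $\dot x = r\eta$, $\dot\xi = -\rho\omega$. From these $r\rho$ is conserved, and projecting onto the angular variables one finds $\dot\omega = \eta - (\omega\cdot\eta)\omega$ and $\dot\eta = -\omega + (\omega\cdot\eta)\eta$, so that $\omega$ and $\eta$ remain in the $2$-plane they initially span, the angle $\theta=\measuredangle(\omega,\eta)$ is preserved, and both vectors rotate at the common angular rate $\sin\theta$. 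Every non-stationary orbit is therefore periodic, and the periodic set has full measure in $\Wpe$. Consequently only Theorem \ref{thm:sharp-weyl} applies, giving the stated form of $N(\lambda)$ with $m=1$.

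Next I compute $\gamma_2$. For an SG-classical operator of order $(-d,-d)$ the functional $\TR$ from \cite{BaCo11} reduces to the integral of the $(0,0)$-homogeneous corner symbol over $\Sph^{d-1}\times\Sph^{d-1}$, weighted by $(2\pi)^{-d}$. Since the corner symbol of $P^{-d}$ is $(|\omega||\eta|)^{-d} = 1$, I obtain
\[
    \TR(P^{-d}) = \frac{[\vol(\Sph^{d-1})]^2}{(2\pi)^{d}},
\]
and the relation $\gamma_2 = \TR(P^{-d})/d$ from Theorem \ref{thm:sharp-weyl} reproduces the stated formula.

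For $\gamma_1$ I must evaluate the more delicate functional $\wTR_{x,\xi}(P^{-d})$, which depends on sub-principal data and not only on the corner symbol. Exploiting the factorized structure $p(x,\xi) = \ang{x}\ang{\xi}$, I reduce the computation to the regularized Mellin integral $I(s) = \int_{\RR^d}\ang{z}^{-s}\,dz = \pi^{d/2}\Gamma((s-d)/2)/\Gamma(s/2)$, which has a simple pole at $s=d$. Expanding $I(s)^2$ near $s=d$ using $\Gamma(\epsilon/2) = 2/\epsilon - \gamma + O(\epsilon)$ and $\Gamma(d/2+\epsilon/2) = \Gamma(d/2)[1 + (\epsilon/2)\Psi(d/2) + O(\epsilon^2)]$ extracts both the residue and the finite part; the latter produces precisely the combination $\gamma+\Psi(d/2)$. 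Combining the resulting value of $\wTR_{x,\xi}(P^{-d})$ with the identity $\gamma_1 = \wTR_{x,\xi}(P^{-d})/d - \TR(P^{-d})/d^2$ from Theorem \ref{thm:sharp-weyl} then produces the claimed formula. The main obstacle is precisely this last step: since $\ang{x}$ and $\sqrt{1-\Delta}$ do not commute, one cannot simply identify $P^{-s}$ with $\ang{x}^{-s}\ang{D}^{-s}$, so the sub-principal contributions arising from the symbolic calculus must be controlled and shown to collapse to the clean expression involving only $\Psi(d/2)$, $\gamma$ and $1/d^2$.
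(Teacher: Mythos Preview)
Your computation of $\gamma_2$ and your analysis of the corner flow are correct and match the paper's. The gap is in the $\gamma_1$ step, and it is exactly the one you flag yourself: you propose to read off $\wTR_{x,\xi}(P^{-d})$ from the Laurent expansion of $I(s)^2/(2\pi)^d$ as if this were $\zeta(s)=\Tr P^{-s}$, but you never justify that identification, and the non-commutativity you worry about is genuinely in the way of doing so. As written, the proposal stops at the point where the real work would begin.

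The paper's route avoids this difficulty entirely. It does not go through $\zeta(s)$ or $I(s)^2$; instead it invokes the explicit formula \eqref{eq:trb} from \cite{BaCo11}, which expresses $d\cdot\wTR_{x,\xi}(P^{-d})-\TR(P^{-d})$ purely in terms of the \emph{principal symbol triple} $(p_\psi,p_e,p_\psie)$ of $P$. Since principal symbols are multiplicative, $\sigma(P^{-d})=\sigma(P)^{-d}$ regardless of commutator corrections, so the obstacle you raise simply never appears. Concretely, $p_\psie\equiv 1$ on $\Sph^{d-1}\times\Sph^{d-1}$ kills the logarithmic term $\wTR_\theta$, the $x\leftrightarrow\xi$ symmetry gives $\wTR_\psi=\wTR_e$, and one is left with the single finite-part integral
\[
\lim_{\tau\to\infty}\Big[\int_0^\tau(1+r^2)^{-d/2}r^{d-1}\,dr-\log\tau\Big],
\]
which the paper evaluates by the substitution $t=r^{-2}$ and the integral representation $\Psi(z)=\int_0^\infty\big[(1+t)^{-1}-(1+t)^{-z}\big]\frac{dt}{t}-\gamma$, obtaining $-\tfrac12[\Psi(d/2)+\gamma]$.

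Your Gamma-function idea is in fact a perfectly good alternative for this last integral: the finite part of $I(s)/\vol(\Sph^{d-1})$ at $s=d$ is exactly $-\tfrac12[\Psi(d/2)+\gamma]$, by the expansion you wrote down. What does \emph{not} work is squaring $I(s)$ and pretending the result is $\zeta(s)$; what \emph{does} work is feeding the finite part of $I(s)$ into formula~\eqref{eq:trb}. Once you use \eqref{eq:trb}, the ``sub-principal contributions'' you worry about are already absorbed into the structure of the regularized traces and do not need to be controlled by hand.
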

This implies that the Weyl asymptotics of the operator \[Q = (1 + |x|^2) (1 - \Delta)\] is given by
\begin{align*}
    N(\lambda) = \frac{\gamma_2}{2} \lambda^{\frac d2}\log\lambda + \gamma_1 \lambda^{\frac d2} + O(\lambda^{\frac{d-1}2}\log\lambda),
\end{align*}
with the same coefficients given in Theorem \ref{thm:explicit-example} above.

The paper is organized as follows. In Section \ref{sec:sg} we fix most of the notation used throughout the
paper and recall the basic elements of
the calculus of $\SG$-classical pseudodifferential operators, the associated wave-front set, and 
the computation of the parametrix of Cauchy problems for $\SG$-hyperbolic operators of order $(1,1)$. 
In Section \ref{sec:wt} we consider the wave-trace of a $\SG$-classical operator $P$ of order
$(1,1)$. Section \ref{sec:zfandwt} is devoted to study the relation between the wave-trace and the spectral
$\zeta$-function of $P$.
In Section \ref{sec:proofs} we prove our main Theorems \ref{thm:sharp-weyl} and \ref{thm:refined}, 
while in Section \ref{sec:ex} we examine the example given by the model operator $P=\ang{\cdot}\ang{D}$,
and prove Theorem \ref{thm:explicit-example}.
We conclude with a short appendix on asymptotically Euclidean manifolds and a few more remarks about
aspects of the proofs of the main results.

\subsection*{Acknowledgements}
We would like to thank R. Schulz for many helpful discussions and various remarks on the manuscript.

\section{SG-Calculus on \texorpdfstring{$\mathbb{R}^d$}{Rd}}\label{sec:sg}
The Fourier transform $\mathcal{F} : \Sw(\RR^d) \to \Sw(\RR^d)$ is defined by
\begin{align*}
    (\mathcal{F} u)(\xi) = \widehat{u}(\xi)=\int e^{-ix\xi}\,u(x)\,dx, u \in \Sw(\RR^d), 
\end{align*}
and extends by duality to a bounded linear operator $\mathcal{F} : \Sw'(\RR^d) \to \Sw'(\RR^d)$.
The set of pseudodifferential operators $A = a^w(x,D) = \Op^w(a) : \Sw(\RR^d) \to \Sw'(\RR^d)$ on $\RR^d$
with Weyl symbol $a \in \Sw'(\RR^{2d})$ can be defined through the Weyl-quantization%
\footnote{The formula 
involving integrals only holds true for $a \in \Sw(\RR^{2d})$, but the quantization can be extended to any
$a \in \Sw'(\RR^{2d})$ using the 
Fourier transform, pull-back by linear transformations, and the Schwartz kernel theorem.}
\begin{align*}
    Au(x) = (2\pi)^{-d}\iint e^{i(x-y)\xi} a( (x+y)/2, \xi) u(y) dy\,d\xi, \quad u \in \Sw(\RR^d).
\end{align*}

A smooth function $a \in \Sm(\RR^d\times \RR^d)$ is a SG-symbol of order 
$(m_\psi,m_e) \in \RR^2$, and we write $a \in \symbsg^{m_\psi,m_e}(\RR^{2d})$, if for all
multiindices $\alpha,\beta \in \NN^d$ there exists $C_{\alpha\beta}>0$ such that, for all $x,\xi\in\RR^d$,
\begin{align}\label{eq:sg-estimate}
    \abs{\pa_x^\alpha \pa_\xi^\beta a(x,\xi)} \leq C_{\alpha\beta} \ang{\xi}^{m_\psi-|\beta|} \ang{x}^{m_e - |\alpha|}.
\end{align}
The space $\symbsg^{m_\psi,m_e}(\RR^{2d})$ becomes a Fr\'echet space with the seminorms being the best constants 
in \eqref{eq:sg-estimate}. The space of all $\SG$-pseudodifferential operators of order $(m_\psi,m_e)$ is denoted by
\begin{align*}
    \calcsg^{m_\psi,m_e}(\RR^d) = \{ \Op^w(a) \colon a \in \symbsg^{m_\psi,m_e}(\RR^{2d})\}.
\end{align*}
We have the following properties (we refer to, e.g., \cite{Cordes} and \cite{NiRo}*{Chapter 3} for an overview of the SG-calculus):
\begin{enumerate}
    \item $\displaystyle\calcsg(\RR^d)=\bigcup_{(m_\psi,m_e)\in\RR^2}\calcsg^{m_\psi,m_e}(\RR^d)$ is a graded *-algebra;
    	its elements are linear continuous operators from $\Sw(\RR^d)$ to itself, extendable to linear
        continuous operators on $\Sw'(\RR^d)$;
    \item the differential operators of the form
        \begin{align*}
            \sum_{|\alpha| \leq m_e, |\beta| \leq m_\psi} a_{\alpha,\beta} x^\alpha D^\beta, \quad m_e,m_\psi\in\NN,
        \end{align*}
        are $\SG$ operators of order $(m_\psi,m_e)$;
    \item If $A \in \calcsg^{0,0}(\RR^d)$, then $A$ extends to a bounded linear operator
        \begin{align*}
            A : L^2(\RR^d) \to L^2(\RR^d);
        \end{align*}
    \item there is an associated scale of $\SG$-Sobolev spaces (also known as Sobolev-Kato spaces), defined by
        \begin{align*}
            H^{s_\psi,s_e}(\RR^d) = \{u \in \Sw'(\RR^d) \colon \| \ang{\cdot}^{s_e} \ang{D}^{s_\psi} u\|_{L^2(\RR^d)} < \infty \},
        \end{align*}
        and, for all $m_\psi,m_e,s_\psi,s_e\in\RR$, the operator $A \in \calcsg^{m_\psi,m_e}(\RR^d)$ is a bounded linear operator
        \begin{align*}
            A : H^{s_\psi,s_e}(\RR^d) \to H^{s_\psi-m_\psi,s_e-m_e}(\RR^d);
        \end{align*}
    \item the inclusions $H^{s_\psi,s_e}(\RR^d)\subset H^{r_\psi,r_e}(\RR^d)$, $s_\psi\ge r_\psi$, $s_e\ge r_e$,
    are continuous, compact when the order components inequalities are both strict; 
    moreover, the scale of the Sobolev-Kato spaces is global in the sense that
        \begin{align*}
            \bigcup_{s_\psi,s_e} H^{s_\psi,s_e}(\RR^d) = \Sw'(\RR^d), \quad \bigcap_{s_\psi,s_e} H^{s_\psi,s_e}(\RR^d) = \Sw(\RR^d);
        \end{align*}
    \item an operator $A = \Op^w(a) \in \calcsg^{m_\psi,m_e}(\RR^d)$ is elliptic if its symbol $a$ is invertible for $|x|+|\xi| \ge R>0$, 
        and $\chi(|x|+|\xi|)[a(x,\xi)]^{-1}$ is a symbol in $\symbsg^{-m_\psi,-m_e}(\RR^{2d})$, where
        $\chi \in \Sm(\RR)$ with $\chi(x) = 1$ for $x > 2R$ and $\chi(x) = 0$ for $x < R$;
  	
        \item if $A\in \calcsg^{m_\psi,m_e}(\RR^d)$ is an elliptic operator, then there is a parametrix $B \in \calcsg^{-m_\psi,-m_e}(\RR^d)$ such that
        \begin{align*}
            AB - \id \in \calcsg^{-\infty,-\infty}(\RR^d), \quad BA - \id \in \calcsg^{-\infty,-\infty}(\RR^d).
        \end{align*}
\end{enumerate}
\subsection{SG-Classical Symbols}\label{subs:sgcl}
We first introduce two classes of $\SG$-symbols which are \textit{homogeneous in the large} with respect either to the
variable or the covariable. For any $\rho>0$, $x_0\in\RR^d$, we let $B_\rho(x_0)=\{x\in\RR^d\colon |x-x_0|<\rho \}$
and we fix a cut-off function $\omega \in \Smc(\RR^d)$ with $\omega \equiv 1$ on the ball $B_\frac{1}{2}(0)$.
\begin{enumerate}[(1)]
    \item A symbol $a = a(x, \xi)$ belongs to the class $\SG^{m_\psi,m_e}_{\cl(\xi)}(\RR^{2d})$ if there exist functions $a_{m_\psi-i, \cdot} (x, \xi)$, $i=0,1,\dots$, homogeneous of degree $m_\psi-i$ with respect to the variable
    $\xi$, smooth with respect to the variable $x$, such that,
    \[
    a(x, \xi) - \sum_{i=0}^{M-1} (1-\omega(\xi)) \, a_{m_\psi-i, \cdot} (x, \xi)\in \SG^{m_\psi-M, m_e}(\RR^{2d}), \quad M=1,2, \ldots
    \]
    \item A symbol $a$ belongs to the class $\SG_{\cl(x)}^{m_\psi,m_e}(\RR^{2d})$ if $a \circ R \in \SG^{m_e,m_\psi}_{\cl(\xi)}(\RR^{2d})$, where $R(x,\xi) = (\xi, x)$.
        This means that $a(x,\xi)$ has an asymptotic expansion into homogeneous terms in $x$.
\end{enumerate}
\begin{definition}\label{def:sgcl}
A symbol $a$ is called $\SG$-classical, and we write $a \in \SG_{\cl(x,\xi)}^{m_\psi,m_e}(\RR^{2d})=\SG_{\cl}^{m_\psi,m_e}(\RR^{2d})$, 
if the following two conditions hold true:
\begin{enumerate}[(i)]
    \item there exist functions $a_{m_\psi-j, \cdot} (x, \xi)$, homogeneous of degree $m_\psi-j$ with respect to $\xi$ and smooth in $x$, such that
    $(1-\omega(\xi)) a_{m_\psi-j, \cdot} (x, \xi)\in \SG_{\cl(x)}^{m_\psi-j, m_e}(\RR^{2d})$ and
    \[
    a(x, \xi)- \sum_{j=0}^{M-1} (1-\omega(\xi)) \, a_{m_\psi-j, \cdot}(x, \xi) \in \SG^{m_\psi-M, m_e}_{\cl(x)}(\RR^{2d}), \quad M=1,2,\dots;
    \]
    \item there exist functions $a_{\cdot, m_e-k}(x, \xi)$, homogeneous of degree $m_e-k$ with respect to the $x$ and smooth in $\xi$, such that
    $(1-\omega(x))a_{\cdot, m_e-k}(x, \xi)\in \SG_{\cl(\xi)}^{m_\psi, m_e-k}(\RR^{2d})$ and
    \[
    a(x, \xi) - \sum_{k=0}^{M-1} (1-\omega(x)) \, a_{\cdot, m_e-k} (x,\xi)\in \SG^{m_\psi, m_e-M}_{\cl(\xi)}(\RR^{2d}), \quad M=1,2,\dots
    \] 
\end{enumerate}
\end{definition}
Note that the definition of $\SG$-classical symbol implies a condition of compatibility for the terms of the expansions with respect to $x$ and $\xi$. In fact, defining $\sigma^\psi_{m_\psi-j}$ and $\sigma^e_{m_e-i}$ on $\SG_{\cl(\xi)}^{m_\psi,m_e}$ and $\SG_{\cl(x)}^{m_\psi,m_e}$, respectively,  as
\begin{align*}
	\sigma^\psi_{m_\psi-j}(a)(x, \xi) &= a_{m_\psi-j, \cdot}(x, \xi),\quad j=0, 1, \ldots, 
	\\
	\sigma^e_{m_e-k}(a)(x, \xi) &= a_{\cdot, m_e-k}(x, \xi),\quad k=0, 1, \ldots,
\end{align*}
it possible to prove that 
\[
\begin{split}
a_{m_\psi-j,m_e-k}=\sigma^{\psi e}_{m_\psi-j,m_e-k}(a)=\sigma^\psi_{m_\psi-j}(\sigma^e_{m_e-k}(a))= \sigma^e_{m_e-k}(\sigma^\psi_{m_\psi-j}(a))
\end{split}
\]
for all $j,k \in \NN$.

Moreover, the composition of two $\SG$-classical operators is still classical. 
For $A=\Op{a}\in \calcclsg^{m_\psi,m_e}(\RR^d)$ 
the triple 
\begin{align*}
    \sigma(A) = (\sigma^\psi(A),\sigma^e(A),\sigma^{\psi e}(A))=(a_\psi,a_e,a_\psie).
\end{align*}
where
\begin{align*}
    \sigma^\psi(A)(x,\xi) = a_\psi(x,\xi) &= a_{m_\psi,\cdot}\left(x,\frac{\xi}{|\xi|}\right),\\
    \sigma^e(A)(x,\xi) = a_e(x,\xi) &= a_{\cdot,m_e}\left(\frac{x}{|x|},\xi\right),\\
    \sigma^{\psi,e}(A)(x,\xi) = a_\psie(x,\xi) &= a_{m_\psi,m_e}\left(\frac{x}{|x|},\frac{\xi}{|\xi|}\right)
\end{align*}
is called the \emph{principal symbol} of $A$. 
This definition keeps the usual multiplicative behaviour, that is, for any $A\in \calcclsg^{m_\psi,m_e}(\RR^d)$, 
$B\in \calcclsg^{r_\psi,r_e}(\RR^d)$, $(m_\psi,m_e),(r_\psi,r_e)\in\RR^2$,
the principal symbol of $AB$ is given by
\[\sigma(AB) = \sigma(A) \cdot \sigma(B),\]
where the product is taken component-wise.
Proposition~\ref{prop:ellclass} below allows to express the ellipticity
of $\SG$-classical operators in terms of their principal symbol.
Fixing a cut-off function $\omega \in \Smc(\RR^d)$ as above, we define the principal part of $a$ to be
\begin{equation}\label{eq:ap}
    a_p(x,\xi) = (1 - \omega(\xi))a_\psi(x,\xi) + (1 - \omega(x)) (a_e(x,\xi) - (1 - \omega(\xi))a_\psie(x,\xi)).
\end{equation}
\subsection{SG-wavefront sets}\label{subs:wfs}
We denote by $\Wt$ the disjoint union
\begin{align*}
    \Wt = \Wp\sqcup\We\sqcup\Wpe=(\RR^d \times \Sph^{d-1}) \sqcup (\Sph^{d-1}\times \RR^d) \sqcup (\Sph^{d-1} \times \Sph^{d-1}),
\end{align*}
which may be viewed as the boundary of the (double) radial compactification of the phase space 
$T^*\RR^d\simeq\RR^{d}\times\RR^d$.
Therefore, it is natural to define smooth functions on $\Wt$ as follows:
\begin{align*}
    \Sm(\Wt) = \{ &(f_\psi,f_e,f_\psie) \in \Sm(\Wp)\times \Sm(\We) \times \Sm(\Wpe) \colon\\
    &\lim_{\lambda\to \infty} f_\psi(\lambda x, \xi) = \lim_{\lambda\to \infty} f_e(x,\lambda \xi) = f_\psie(x,\xi) \text{ for all } (x,\xi) \in \Sph^{d-1}\times \Sph^{d-1}\}.
\end{align*}
By restriction, the principal symbol can be defined as a map $\sigma : \SG_\cl^{m_\psi,m_e}(\RR^{2d})\ni a\mapsto \sigma(a) \in \Sm(\Wt)$.
\begin{proposition}\label{prop:ellclass}
	An operator $A\in \calcclsg^{m_\psi,m_e}(\RR^d)$ is elliptic if and only if
    $\sigma(A)(x,\xi) \not = 0$ for all $(x,\xi) \in \Wt$.
\end{proposition}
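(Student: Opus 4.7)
The plan is to prove the two implications separately, in each direction relating the asymptotic expansions from Definition~\ref{def:sgcl} to the behaviour of $a$ at infinity in phase space.

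For the forward direction, assume $A$ is elliptic. Then there exist $R,c>0$ with $|a(x,\xi)|\geq c\,\ang{\xi}^{m_\psi}\ang{x}^{m_e}$ for $|x|+|\xi|\geq R$. Fix $(x_0,\xi_0)\in\Wp=\RR^d\times\Sph^{d-1}$. By the $\cl(\xi)$-expansion of $a$ and the homogeneity of $a_{m_\psi,\cdot}$ of degree $m_\psi$ in $\xi$, one has $\lambda^{-m_\psi}a(x_0,\lambda\xi_0)\to a_\psi(x_0,\xi_0)$ as $\lambda\to\infty$; at the same time the lower bound gives $|\lambda^{-m_\psi}a(x_0,\lambda\xi_0)|\geq c\,\ang{x_0}^{m_e}(\ang{\lambda\xi_0}/\lambda)^{m_\psi}\to c\,\ang{x_0}^{m_e}>0$, so $a_\psi(x_0,\xi_0)\neq 0$. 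Scaling $x_0$ instead of $\xi_0$ proves $a_e\neq 0$ on $\We$, and scaling both arguments jointly proves $a_{\psi e}\neq 0$ on $\Wpe$.

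For the backward direction, assume $\sigma(A)\neq 0$ on $\Wt$. I work with the principal part $a_p$ defined in \eqref{eq:ap}; using the compatibility $a_{m_\psi,m_e}=\sigma^\psi_{m_\psi}(\sigma^e_{m_e}(a))=\sigma^e_{m_e}(\sigma^\psi_{m_\psi}(a))$ stated after Definition~\ref{def:sgcl}, a direct computation shows $a-a_p\in\SG^{m_\psi-1,m_e-1}(\RR^{2d})$. The central claim is that $|a_p(x,\xi)|\geq c\,\ang{\xi}^{m_\psi}\ang{x}^{m_e}$ for $|x|+|\xi|$ sufficiently large. Using the homogeneities of $a_\psi,a_e,a_{\psi e}$ the function $|a_p(x,\xi)|\,\ang{\xi}^{-m_\psi}\ang{x}^{-m_e}$ extends continuously to the boundary of the radial compactification of $\RR^{2d}$, a compact manifold with corners whose three boundary faces are identified with $\Wp,\We$ and $\Wpe$; the boundary values are built from the three components of $\sigma(A)$ and agree across the corner by the compatibility just recalled. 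Non-vanishing of $\sigma(A)$ together with compactness then yields the claimed uniform lower bound, and absorbing the error $|a-a_p|=O(\ang{\xi}^{m_\psi-1}\ang{x}^{m_e-1})$ (possibly by enlarging $R$) gives the same estimate for $a$ itself.

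To conclude, I still need $\chi(|x|+|\xi|)\,a(x,\xi)^{-1}\in\SG^{-m_\psi,-m_e}(\RR^{2d})$, which is a routine verification: the derivatives of $1/a$ expand by Fa\`a di Bruno into finite sums of products of derivatives of $a$ divided by powers of $a$, and each such term satisfies the required $\SG$-estimate by combining \eqref{eq:sg-estimate} with the lower bound just established. The main obstacle is the uniform lower bound on $|a_p|$ across the three asymptotic regimes ($|\xi|\to\infty$ with $x$ bounded, $|x|\to\infty$ with $\xi$ bounded, and both tending to infinity simultaneously); handling the interplay of the three summands in \eqref{eq:ap} is exactly where the corner compatibility of the classical principal symbol at $\Wpe$ is indispensable.
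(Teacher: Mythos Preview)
Your argument is correct and follows the standard route for this kind of statement. However, the paper does not actually prove Proposition~\ref{prop:ellclass}: it is stated without proof, as part of the background review of the $\SG$-calculus, with implicit reference to the standard sources (e.g., Cordes, Nicola--Rodino) cited at the beginning of Section~\ref{sec:sg}. So there is nothing to compare against beyond noting that your proof is the expected one.

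A couple of minor points worth tightening. First, in the forward direction for the corner component you write ``scaling both arguments jointly''; it is cleaner (and needed if $m_\psi\neq m_e$) to scale $x$ and $\xi$ by independent parameters $\mu,\lambda\to\infty$ and use the iterated expansion to get $\mu^{-m_e}\lambda^{-m_\psi}a(\mu x_0,\lambda\xi_0)\to a_{\psi e}(x_0,\xi_0)$. Second, be aware that in the formula~\eqref{eq:ap} the symbols $a_\psi,a_e,a_{\psi e}$ denote the homogeneous functions $a_{m_\psi,\cdot}(x,\xi)$, $a_{\cdot,m_e}(x,\xi)$, $a_{m_\psi,m_e}(x,\xi)$ (extended by homogeneity off the spheres), not their restrictions to the unit spheres as in the display defining $\sigma(A)$; this slight abuse of notation is what makes $a_p$ have the right order and your claim $a-a_p\in\SG^{m_\psi-1,m_e-1}$ valid.
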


For $A \in \SG_\cl^{m_\psi,m_e}(\RR^d)$ we define the following sets (see \cites{CoMa13,MelroseSST}):
\begin{enumerate}
    \item the elliptic set
        \begin{align*}
            \Ell(A) = \{ (x,\xi) \in \Wt \colon \sigma(A)(x,\xi) \not = 0\},
        \end{align*}
    \item the characteristic set
        \begin{align*}
            \Char(A) = \Wt \setminus \Ell(A),
        \end{align*}
    \item the operator $\SG$-wavefront set $\WFSG'(A)$, defined by
        $(x,\xi) \notin \WFSG'(A)$ if there exists 
        $B \in \Op\SG^{0,0}_\cl(\RR^d)$ such that $AB \in \Op\SG^{-\infty,-\infty}(\RR^d)$ satisfying $(x,\xi) \in \Ell(B)$, 
        or, more concisely,
        \begin{align*}
            \WFSG'(A) = \bigcap_{\substack{B \in \Op\SG_\cl^{0,0}\\AB \in \Op\SG^{-\infty,-\infty}}} \Char(B).
        \end{align*}
\end{enumerate}

The $\SG$-wavefront set of a distribution $u \in \Sw'(\RR^d)$ is defined as
\begin{align*}
    \WFSG(u) = \bigcap_{\substack{A \in \Op\SG_\cl^{0,0}\\Au \in \Sw(\RR^d)}} \Char(A),
\end{align*}
see \cites{Cordes, CoMa13, MelroseSST}. We will decompose the $\SG$-wavefront set of $u\in\Sw'(\RR^d)$
into its components in $\Wt$, namely,
\begin{align*}
    \WFSG(u) = (\WFSG^\psi(u), \WFSG^e(u), \WFSG^\psie(u)), 
    \quad
    \WFSG^\bullet(u) \subset \Wt^\bullet, \bullet\in\{\psi,e,\psie\}.
\end{align*}
Then, we have that
\begin{align*}
    \WFSG^\psi(u) = \WF_\cl(u),
\end{align*}
where $\WF_\cl(u)$ is the classical H\"ormander's wavefront set.

The $\SG$-wavefront set is well-behaved with respect to the Fourier transform (see, e.g.,  \cite{CoMa03}*{Lemma 2.4}):
\begin{align*}
    (x,\xi) \in \WFSG(u)\Longleftrightarrow (\xi,-x)\in\WFSG(\widehat{u}).
\end{align*}
\subsection{Complex Powers}
As in the case of closed manifolds, it is possible to define complex powers of $\SG$-pseudodifferential operators.
We will only review the crucial properties of complex powers for a positive elliptic 
self-adjoint operator $A \in \calcclsg^{m_\psi,m_e}(\RR^d)$,
$m_\psi,m_e>0$. For the definition and proofs of the following properties, we refer to \cite{BaCo11} (cf. also \cites{MaScSe06, Schrohe88}).
\begin{enumerate}[(i)]
    \item $A^z A^s= A^{z+s}$ for all $z, s \in \CC$.
    \item $A^k= \underbrace{A \circ \ldots \circ A}_{k \text{ times}}$ for $k \in \NN$.
    \item If $A \in \calcclsg^{m_\psi, m_e}(\RR^d)$, then $A^z \in \calcclsg^{m_\psi \Re z, m_e \Re z}(\RR^d)$.
    \item If $A$ is a classical $\SG$-operator, then $A^z$ is classical and its principal symbol is given by
        \begin{align*}
            \sigma(A^z) = \sigma(A)^z.
        \end{align*}
    \item\label{it:traceclass} For $\Re z < -d \cdot \min\{1/m_e, 1/m_\psi\}$, $A^z$ is trace-class.
\end{enumerate}

For any $A=\Op(a)\in\calcclsg^{m_\psi, m_e}(\RR^d)$ as above,
the full symbol\footnote{For the definition of the zeta function it does not matter which quantization we choose.}
of $A^z$ will be denoted by \[a(z) \in\calcclsg^{m_\psi \Re z, m_e \Re z}(\RR^d).\]%

Let $s \in \CC$ with $\Re(s) > \max\{d/m_e, d/m_\psi\}$.
Using the property~\eqref{it:traceclass} it is possible to define $\zeta(s)$ by
\begin{equation}\label{eq:zetaf}
    \zeta(s) = \Tr A^{-s} = \int K_{A^{-s}}(x, x) dx = (2\pi)^{-d} \iint a(x,\xi; -s) dx\,d\xi,
\end{equation}
where $K_{A^z}$ is the Schwartz kernel of $A^z$.
We note that the $\zeta$-function may be written as
\begin{align*}
    \zeta(s) = \sum_{j=1}^\infty \lambda_j^{-s}.
\end{align*}
with $(\lambda_j)_{j\in\NN}$ the sequence of eigenvalues of $A$.

\begin{theorem}[Battisti--Coriasco~\cite{BaCo11}]\label{thm:fzeta}
    The function $\zeta(s)$ is holomorphic for $\Re(s)>d\cdot\max \{1/m_e, 1/m_e\}$.
    Moreover, it can be extended as a meromorphic function with possible poles at the points
    \[
    s_j^1=\frac{d-j}{m_\psi}, \, j=0, 1, \ldots, \quad s^2_k=\frac{d-k}{m_e}, \, k=0, 1, \ldots
    \]
    Such poles can be of order two if and only if there exist integers $j, k$ such that
    \begin{equation}
    \label{conpoli}
    s_j^1=\frac{d-j}{m_\psi}=\frac{d-k}{m_e}=s_k^2.
    \end{equation}
\end{theorem}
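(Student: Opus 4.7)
The plan is to analyze the trace integral
\[
    \zeta(s) = (2\pi)^{-d}\iint a(x,\xi;-s)\,dx\,d\xi
\]
directly, exploiting the $\SG$-classical structure of $a(\cdot\,;-s)$ and extracting the poles region by region in phase space. The holomorphy assertion for $\Re(s) > d\max\{1/m_\psi, 1/m_e\}$ follows from property (\ref{it:traceclass}) of the complex powers: the integral converges absolutely there, and because the symbol seminorms of $\partial_s a(\cdot\,;-s)$ depend holomorphically on $s$, one may differentiate under the integral sign.

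For the meromorphic continuation I would decompose the integrand using the cutoffs introduced in Subsection \ref{subs:sgcl}, writing
\[
    1 = \omega(x)\omega(\xi) + (1-\omega(x))\omega(\xi) + \omega(x)(1-\omega(\xi)) + (1-\omega(x))(1-\omega(\xi)).
\]
The first piece has compact support in $(x,\xi)$ and contributes an entire function of $s$. On the region where $|\xi|$ is large but $|x|$ is bounded, I would insert the $\cl(\xi)$-expansion of $a(\cdot\,;-s)$ from Definition \ref{def:sgcl} up to order $N$, change to polar coordinates $\xi = r\theta$, and evaluate each homogeneous term as
\[
    \int_{\Sph^{d-1}} a_{-sm_\psi-j,\,\cdot}(x,\theta)\,d\theta \cdot \int_1^{\infty} r^{-sm_\psi - j + d - 1}\,dr.
\]
The radial factor equals $(sm_\psi + j - d)^{-1}$ up to an entire term, producing a simple pole at $s = (d-j)/m_\psi = s_j^1$. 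Letting $N \to \infty$ and using that the $N$-th remainder is holomorphic on a half-plane which enlarges with $N$, one obtains the meromorphic extension with poles exactly at the $s_j^1$. The region $(1-\omega(x))\omega(\xi)$ is handled symmetrically using the $\cl(x)$-expansion and yields simple poles at $s_k^2 = (d-k)/m_e$.

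The main obstacle, and the reason double poles can appear, is the corner region $(1-\omega(x))(1-\omega(\xi))$. Here I would apply the full $\SG$-classical expansion of Definition \ref{def:sgcl}, appealing crucially to the compatibility identity $a_{m_\psi - j,\, m_e - k} = \sigma^\psi_{m_\psi-j}\circ\sigma^e_{m_e-k}(a) = \sigma^e_{m_e-k}\circ\sigma^\psi_{m_\psi-j}(a)$ to ensure that the doubly homogeneous components are well-defined functions on $\Sph^{d-1}\times\Sph^{d-1}$. Polar coordinates in both $x$ and $\xi$ together with iterated radial integration produce, for each $(j,k)$, a factor
\[
    \frac{1}{(sm_\psi + j - d)(sm_e + k - d)},
\]
whose pole structure is precisely what is claimed: a genuine double pole at $s = s_j^1$ exactly when $s_j^1 = s_k^2$, and two simple poles otherwise. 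The hard part is the uniform control of the three kinds of remainders (pure $\psi$, pure $e$, and mixed corner) as the truncation orders tend to infinity: one must apply the SG-estimates \eqref{eq:sg-estimate} to $\partial_s a$ so that each truncation yields holomorphy on a strictly larger half-plane, and one must verify that no additional poles arise from the cross-interactions between the expansions. Once this bookkeeping is carried out, the claimed meromorphic extension and the characterization of second-order poles by the resonance \eqref{conpoli} follow.
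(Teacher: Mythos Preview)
Your proposal is correct and follows essentially the same approach as the one from \cite{BaCo11} that the paper recalls in the proof of Proposition~\ref{prop:third-term}: a four-region decomposition of phase space, with the compact piece entire, the two edge pieces yielding simple poles via a single radial integration after inserting the one-variable homogeneous expansion, and the corner piece yielding the product $\frac{1}{(s-s_j^1)(s-s_k^2)}$ via iterated radial integration after inserting both expansions. The only cosmetic difference is your use of smooth cutoffs $\omega$ in place of the sharp cutoffs $\{|x|\le 1\}$, $\{|\xi|\le 1\}$ used there, which does not affect the pole structure.
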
 
\subsection{Parametrix of SG-hyperbolic Cauchy problems}\label{subs:waveq}
Let $P \in \calcclsg^{1,1}(\RR^d)$ be a self-adjoint positive elliptic operator.
By the construction from \cite{CoPa02}*{Theorem 1.2} (cf. also \cites{CoriascoFIO1,CoriascoFIO2,CoMa03}), it is 
possible to calculate a suitable parametrix for the Cauchy problem associated with the 
wave equation, namely,
\begin{equation}\label{eq:wave}
    \left\{\begin{aligned}
        (i\pa_t - P)u(t,x) &= 0\\
        u(0,x) &= u_0(x).
    \end{aligned}\right.
\end{equation}

The solution operator of \eqref{eq:wave} exists by the spectral theorem and
is denoted by $U(t) = e^{-itP} = [\F_{\lambda\to t}(dE)](t)$, where $dE$ is the spectral measure of $P$.
There exists a short time parametrix $\widetilde{U}(t)$, which is 
given by operators defined through the integral kernels
\begin{equation}\label{eq:utker}
    K_{\widetilde{U}(t)}(x,y) = (2\pi)^{-d} \int e^{i(\phi(t,x,\xi) - y\xi)} \tilde{a}(t,x,\xi) d\xi,
\end{equation}
where $\tilde{a} \in \Smc( (-2\ep,2\ep), \symbclsg^{0,0})$ with $\tilde{a}(0) - 1 \in \symbsg^{-\infty,-\infty}$ and $\phi \in \Sm( (-\ep,\ep), \symbclsg^{1,1})$.

The parametrix $\widetilde{U}(t)$ solves the wave equation \eqref{eq:wave}
in the sense that $\tilde{u}(t,x) = [\widetilde{U}(t) u_0](x)$ satisfies
\begin{equation}\label{eq:wave-parametrix}
    \left\{\begin{aligned}
        (i\pa_t - P)\tilde{u}(t) &\in \Sm( (-\ep/2,\ep/2), \Sw(\RR^d)) \\
        \tilde{u}(0) - u_0 &\in \Sw(\RR^d).
    \end{aligned}\right.
\end{equation}
By a Duhamel argument, $U(t)-\widetilde{U}(t) \in \Sm( (-\ep, \ep), \mathcal{L}(\Sw'(\RR^d), \Sw(\RR^d)))$,
(cf. \cite{CoMa13}*{Theorem 16}, \cite{DGW18}*{p.~284}).
Since the error term is regularizing, we obtain that
\begin{align}
    K_{U(t)}(x,y) = (2\pi)^{-d}\int e^{i(\phi(t,x,\xi) - y\xi)} a(t,x,\xi) d\xi,
\end{align}
for $a \in \Smc( (-\ep,\ep), \symbclsg^{0,0})$ with $a(0) = 1$ (cf. \cite{CDS19}*{Lemma~4.14}).

Let $p$ be the principal part of the full Weyl-quantized symbol of $P$.
The phase function $\phi$ satisfies the eikonal equation
\begin{equation}\label{eq:eikonal}
    \left\{\begin{aligned}
        \pa_t \phi(t,x,\xi) + p(x,\phi^\prime_x(t,x,\xi)) &= 0\\
        \phi(0,x,\xi) &= x\xi.
    \end{aligned}\right.
\end{equation}
This implies that we have a Taylor expansion in $t$ of the form
\begin{equation}\label{eq:phit}
    \phi(t,x,\xi) = x\xi - t p(x,\xi) + t^2 \Sm(\RR_t, \SG^{1,1}_\cl)
\end{equation}
for $t$ small enough.

For any $f \in \Sm(\RR^{2d})$, we define the Hamiltonian vector field by
\begin{align*}
    \hamvf_f = \ang{\pa_x f, \pa_\xi} - \ang{\pa_\xi f, \pa_x}
\end{align*}
and we denote its flow by $t \mapsto \exp(t\hamvf_f)$. For $P\in\calcclsg^{1,1}(\RR^d)$, we will collectively 
denote by $\hamvf_{\sigma(P)}$ the Hamiltonian 
vector fields on $\Wt^\bullet$ generated by $\sigma^\bullet(P)$, $\bullet\in\{\psi,e,\psie\}$,
and by $t \mapsto \exp(t\hamvf_{\sigma(P)})$ the three corresponding flows.

By the group property, $U(t+s) = U(t) U(s)$, we can extend propagation of singularities results for small times to $t \in \RR$.
In \cite{CoMa03} the propagation of the $\SG$-wavefront set under the action of $\SG$-classical operators and operator
families like $U(t)$ has been studied. In particular, the following theorem was proved there 
(see also \cite{CJT4}).
\begin{theorem}\label{thm:sgwfsprop}
    Let $u_0\in\Sw'(\RR^d)$ and $U(t) = e^{-itP}$.
	Then, \[\WFSG^\bullet(U(t)u_0)\subseteq \Phi^\bullet(t)(\WFSG^\bullet(u_0)),\] where
	$\Phi^\bullet$ is the smooth family of canonical transformations on $\Wt^\bullet$ generated by $\sigma^\bullet(\phi)$ with
	$\bullet\in\{\psi,e,\psie\}$. 
\end{theorem}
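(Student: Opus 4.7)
The plan is to reduce the statement to short-time propagation via the group property $U(t+s)=U(t)U(s)$, where the parametrix $\widetilde U(t)$ from \eqref{eq:utker} is available, and then bootstrap to arbitrary $t$. I fix a component $\bullet\in\{\psi,e,\psie\}$; the argument is parallel in each case.

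For $|t|<\ep$, $\widetilde U(t)$ is an $\SG$-classical Fourier integral operator whose phase $\phi(t,\cdot,\cdot)\in\Sm((-\ep,\ep),\symbclsg^{1,1})$ and amplitude $\tilde a\in\Sm((-\ep,\ep),\symbclsg^{0,0})$ fit into the $\SG$-FIO calculus developed in \cite{CoMa03} (cf.\ also \cite{CoriascoFIO1,CoriascoFIO2}). The principal symbol of $\phi$ generates a family of canonical transformations $\Phi^\bullet(t)$ on $\Wt^\bullet$, and by the eikonal equation \eqref{eq:eikonal} together with the Taylor expansion \eqref{eq:phit} its infinitesimal generator coincides with the Hamiltonian vector field $\hamvf_{\sigma^\bullet(P)}$. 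The propagation-of-singularities result for $\SG$-FIOs then yields
\[\WFSG^\bullet(\widetilde U(t)u_0)\subseteq \Phi^\bullet(t)(\WFSG^\bullet(u_0)),\qquad |t|<\ep.\]
Since $U(t)-\widetilde U(t)$ maps $\Sw'(\RR^d)$ continuously into $\Sw(\RR^d)$, the distributions $U(t)u_0$ and $\widetilde U(t)u_0$ share the same $\SG$-wavefront set, so the inclusion persists for $U(t)u_0$. To extend to arbitrary $t\in\RR$, I choose $N\in\NN$ with $\tau:=t/N$ satisfying $|\tau|<\ep$ and iterate, using $U(t)=U(\tau)^N$ together with the one-parameter group law $\Phi^\bullet(\tau)^N=\Phi^\bullet(t)$ on $\Wt^\bullet$.

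The hard part is verifying that the principal symbol of $\phi$ indeed defines a canonical transformation on each boundary face and, in particular, on the corner $\Wpe$, and that these three flows assemble into a smooth family on $\Wt$. This relies on the $\SG$-classicality of $\phi$ and the compatibility relation $\sigma^\psi_{m_\psi-j}\circ\sigma^e_{m_e-k}=\sigma^e_{m_e-k}\circ\sigma^\psi_{m_\psi-j}$ recalled in Section~\ref{subs:sgcl}; once these are in hand, the Egorov-type analysis of \cite{CoMa03} produces the $\Phi^\bullet(t)$ directly and the argument above carries through without further adjustment.
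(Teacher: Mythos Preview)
The paper does not give its own proof of this theorem: it is stated as a result already established in \cite{CoMa03} (see also \cite{CJT4}), and is simply quoted. Your sketch is essentially an outline of that same argument---short-time propagation via the $\SG$-FIO parametrix of \cite{CoMa03}, transfer to $U(t)$ by the smoothing remainder, and iteration via the group law---so there is no substantive difference in approach.
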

\begin{remark}
	In view of \eqref{eq:phit}, Theorem \ref{thm:sgwfsprop} can also be stated in the following way: for any $u_0\in\Sw'(\RR^d)$
	and $t\in(-\ep/2,\ep/2)$, $\WFSG^\bullet(U(t)u_0)\subset\exp(t\hamvf_{\sigma^\bullet(p)})(\WFSG^\bullet(u_0))$, where
	$\bullet\in\{\psi,e,\psie\}$, and $\hamvf_f$ is the Hamiltonian vector field generated by $f$. 
	In the sequel we will express this fact in the compact form
	\[
		\WFSG(U(t)u_0)\subset\exp(t\hamvf_{\sigma(p)})(\WFSG(u_0)), \quad u_0\in\Sw'(\RR^d), t\in \RR.
	\]
\end{remark}

\section{Wave Trace}\label{sec:wt}
We fix a positive elliptic operator $P \in \Op\SG_\cl^{1,1}(\RR^{2d})$ 
with $\psie$-principal symbol $p_\psie = \sigma^\psie(P)$.
By the compactness of the embedding of $\SG$-Sobolev spaces, we have that the resolvent $(\lambda - P)^{-1}$
is compact for $\lambda > 0$ and hence
there exists an orthonormal basis $\{\psi_j\}$ of $L^2$ consisting 
of eigenfunctions of $P$ with eigenvalues $\lambda_j$ with the property that
\begin{align*}
    0 < \lambda_1 \leq \lambda_2 \leq \dots \to + \infty.
\end{align*}
Therefore, the spectral measure is given by
$dE(\lambda) = \sum_{j=1}^\infty \delta_{\lambda_j}(\lambda)\ang{\cdot, \psi_j} \psi_j$, where $\delta_\mu$ is the delta distribution centered at $\mu$, and we have that
\begin{align*}
    N(\lambda) = \Tr \int_0^\lambda dE(\lambda).
\end{align*}

The \emph{wave trace} $w(t)$ is (formally) defined as
\begin{align*}
    w(t) = \Tr U(t) = \sum_{j=1}^\infty e^{-it\lambda_j}.
\end{align*}
As usual, $w(t)$ is well-defined as a distribution by means of integration by parts and the fact that $P^{-N}$ is trace-class for $N > d$ (cf. Schrohe~\cite{Schrohe88}*{Theorem 2.4}).

Theorem~\ref{thm:sgwfsprop} directly implies the following Lemma \ref{lem:prop-sing}.
\begin{lemma}\label{lem:prop-sing}
    Choose $t_0 \in \RR$.
    Let $\Gamma \subset \Wt$ be open and such that $\left[\exp(t\hamvf_{\sigma(P)}) (\Gamma)\right] \cap \Gamma = \emptyset$,
    for all $t\in(t_0-\delta,t_0+\delta)$ and $\delta > 0$ small.
    Then, for all $B \in \Op\SG_\cl^{0,0}(\RR^{2d})$ with $\WFSG'(B) \subset \Gamma$, and all $t\in(t_0-\delta,t_0+\delta)$,
    we have that $B U(t) B \in \mathcal{L}(\Sw'(\RR^d), \Sw(\RR^d))$.
\end{lemma}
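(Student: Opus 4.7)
The proof reduces to a direct application of microlocal propagation (Theorem \ref{thm:sgwfsprop}) and the standard wavefront calculus for $\SG$-classical operators. The plan is to trace the $\SG$-wavefront set through the composition $B \circ U(t) \circ B$ and use the disjointness hypothesis to conclude it is empty, then promote this pointwise conclusion to the operator statement via the Schwartz kernel theorem.

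Concretely, fix $t \in (t_0-\delta, t_0+\delta)$ and an arbitrary $u \in \Sw'(\RR^d)$. The basic inclusion $\WFSG(Au) \subset \WFSG'(A) \cap \WFSG(u)$ for $\SG$-classical $A$, applied to $B$, gives $\WFSG(Bu) \subset \WFSG'(B) \subset \Gamma$. Invoking Theorem \ref{thm:sgwfsprop} (in its flow formulation, valid for all $t \in \RR$ by the group property $U(t+s) = U(t)U(s)$) yields
\begin{align*}
    \WFSG(U(t)Bu) \subset \exp(t\hamvf_{\sigma(P)})(\WFSG(Bu)) \subset \exp(t\hamvf_{\sigma(P)})(\Gamma).
\end{align*}
Applying $B$ once more and using the wavefront inclusion a second time,
\begin{align*}
    \WFSG(BU(t)Bu) \subset \WFSG'(B) \cap \exp(t\hamvf_{\sigma(P)})(\WFSG(Bu)) \subset \Gamma \cap \exp(t\hamvf_{\sigma(P)})(\Gamma) = \emptyset
\end{align*}
by the hypothesis. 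Hence $BU(t)Bu \in \Sw(\RR^d)$ for every $u \in \Sw'(\RR^d)$.

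To upgrade this to the continuous mapping property $BU(t)B \in \mathcal{L}(\Sw'(\RR^d), \Sw(\RR^d))$ I would appeal to the Schwartz kernel theorem together with the characterisation of $\Sw(\RR^{2d})$ as the space of tempered distributions on $\RR^{2d}$ with empty $\SG$-wavefront set. The same propagation argument, applied to the Schwartz kernel of $BU(t)B$ (viewed as the composition of the kernel of $U(t)$, which propagates along the graph of the flow, with the $\SG$-cutoffs induced by $B$ acting in each variable), shows that $K_{BU(t)B}$ has empty $\SG$-wavefront set and hence lies in $\Sw(\RR^{2d})$. This is equivalent to the asserted operator mapping property.

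The only point requiring minor care is the upgrade from the pointwise wavefront computation to the kernel statement; the per-$t$ argument is self-contained since the disjointness hypothesis is uniform over the open interval $(t_0-\delta, t_0+\delta)$, and no new quantitative estimate is needed beyond what is already encoded in the $\SG$-wavefront formalism recalled in Section \ref{subs:wfs}.
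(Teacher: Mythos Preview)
Your proof is correct and follows exactly the approach the paper indicates: the paper states only that ``Theorem~\ref{thm:sgwfsprop} directly implies the following Lemma~\ref{lem:prop-sing}'', and you have spelled out precisely this direct implication by tracing $\WFSG$ through the composition and invoking the disjointness hypothesis. The upgrade to continuity $\Sw'\to\Sw$ can alternatively be obtained by the closed graph theorem (since $BU(t)B$ is already continuous $\Sw'\to\Sw'$), but your kernel argument is equally valid.
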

We will show that the improvement of the Weyl law is only related to the corner component
\begin{align*}
    \{ t \in \RR \colon \exp(t\hamvf_{\sigma^\psie(P)})(x,\xi) = (x,\xi) \text{ for some } (x,\xi) \in \Wt^\psie\}.
\end{align*}

The structure of the singularities of $w(t)$ is more involved. This comes from the fact that the boundary at infinity is not a manifold or equivalently the flow is not homogeneous.
In contrast to the case of a closed manifold, the distribution $w(t)$ will not be a conormal distribution near $0$,
but it turns out that it is a log-polyhomogeneous distribution.

Let $\ep > 0$ as in Section~\ref{subs:waveq} and choose a function $\chi \in \Sw(\RR)$ with $\supp \hat\chi \subset (-\ep, \ep)$ and $\hat\chi = 1$ on $(-\ep/2,\ep/2)$.
\begin{proposition}\label{prop:micro-wavetrace}
    Let $B \in \Op\SG_\cl^{0,0}$ and denote by $N_B(\lambda) = \Tr (E_\lambda B B^*)$ the microlocalized counting function.
    There exist coefficients $w_{jk} \in \RR$ with $k \in \NN$ and $j\in \{0,1\}$ independent of $\chi$ such that
    \begin{equation}\label{eq:microlocN}
        (N_B * \chi)(\lambda) \sim \sum_{k=0}^\infty \sum_{j=0,1} w_{jk} \lambda^{d-k} (\log\lambda)^j
    \end{equation}
    as $\lambda \to \infty$.
\end{proposition}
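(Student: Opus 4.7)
The plan is to follow Hörmander's approach, representing $(N_B * \chi)$ via Fourier duality and the wave trace, but carefully tracking the SG-classical structure to extract the logarithmic contributions from the corner at infinity. By the spectral theorem,
\[
    (dN_B * \chi)(\lambda) = \frac{1}{2\pi}\int_\RR e^{i\lambda t}\hat\chi(t) w_B(t)\,dt, \qquad w_B(t) = \Tr(B B^* U(t)),
\]
and since $\hat\chi$ is supported in $(-\ep,\ep)$, only the small-$t$ behavior of $w_B$ enters the asymptotic expansion. Using the parametrix \eqref{eq:utker} together with the closedness of $\calcclsg(\RR^d)$ under composition, for $|t|<\ep$ the operator $B B^* U(t)$ has a Schwartz kernel of the form $(2\pi)^{-d}\int e^{i(\phi(t,x,\xi)-y\xi)} b(t,x,\xi)\,d\xi$ modulo smoothing, with $b\in \Smc((-\ep,\ep), \symbclsg^{0,0})$ and $b(0,\cdot)=\sigma(B B^*)$ at the principal level. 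Setting $y=x$ and tracing gives
\[
    (dN_B * \chi)(\lambda) = (2\pi)^{-d-1}\iiint e^{i(\phi(t,x,\xi)-x\xi+t\lambda)}\hat\chi(t)\,b(t,x,\xi)\,dt\,dx\,d\xi + O(\lambda^{-\infty}).
\]

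I would next exploit the Taylor expansion $\phi(t,x,\xi) - x\xi = -tp(x,\xi) + t^2 r(t,x,\xi)$ from \eqref{eq:phit}, where $r$ is smooth in $t$ and SG-classical in $(x,\xi)$. Expanding $e^{it^2 r}$ and $b$ as Taylor series in $t$, and performing the $t$-integration against $\hat\chi(t)e^{it(\lambda - p)}$ term-by-term, produces an asymptotic series
\[
    (dN_B * \chi)(\lambda) \sim (2\pi)^{-d}\sum_{\ell\geq 0}\iint \chi^{(\ell)}(\lambda - p(x,\xi))\,q_\ell(x,\xi)\,dx\,d\xi,
\]
where each $q_\ell$ is SG-classical and expressible universally in terms of $b$, $r$, and their derivatives at $t=0$, with $q_0 = b(0,\cdot)$. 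The step is legitimate because $\hat\chi\equiv 1$ near zero kills all boundary contributions from integration by parts in $t$. Antidifferentiating in $\lambda$ and using that $\chi^{(\ell-1)}\in\Sw(\RR)$ for $\ell\geq 1$ while the primitive of $\chi$ is a smoothed Heaviside function, $(N_B * \chi)(\lambda)$ reduces modulo rapidly decaying errors to a sum of integrals of SG-classical symbols over sublevel sets $\{p(x,\xi)<\lambda\}$.

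The core computation is then the polyhomogeneous expansion of such level-set integrals. For $q\in \symbclsg^{a,a}(\RR^{2d})$, I would split $\iint_{\{p<\lambda\}} q\,dx\,d\xi$ into a bounded interior part and three contributions from the $\psi$-, $e$-, and $\psie$-faces of the radial compactification, as dictated by Definition~\ref{def:sgcl}. The $\psi$- and $e$-face pieces yield pure power terms in $\lambda$, while the corner piece produces the logarithm through the elementary calculation $\int_1^\lambda\int_1^{\lambda/r} r^{a+d-1}s^{a+d-1}\,ds\,dr \sim (a+d)^{-1}\lambda^{a+d}\log\lambda + O(\lambda^{a+d})$; iterating the classical expansions of $q$ in $x$ and $\xi$ and summing over $\ell$ delivers the full series $\sum_k (w_{0k}+w_{1k}\log\lambda)\lambda^{d-k}$. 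Independence from $\chi$ is automatic: since $\hat\chi\equiv 1$ on $(-\ep/2,\ep/2)$, all Taylor coefficients $\hat\chi^{(n)}(0)$ vanish for $n\geq 1$, so the $w_{jk}$ depend only on $P$, $B$, and universal combinatorial factors.

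The main technical obstacle is the third step: proving that the level-set integrals of SG-classical symbols admit the claimed two-term asymptotic, with the logarithm appearing to at most the first power and coming exclusively from the corner $\Wpe$. This requires carefully book-keeping the three homogeneity regimes through the change of variables adapted to $\{p=\lambda\}$, and verifying that the pairings of the $\psi$- and $e$-asymptotic expansions with the corner expansion do not produce either higher powers of $\log\lambda$ or additional terms outside the stated form. The bound $j\in\{0,1\}$ ultimately reflects the codimension-two nature of $\Wpe$ inside the compactified phase space: at most one factor of $\log\lambda$ can arise per corner crossing.
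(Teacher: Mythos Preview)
Your approach is correct and close in spirit to the paper's, but the paper streamlines precisely the two steps you flag as requiring work. First, rather than Taylor-expanding $e^{it^2 r}$ and $b$ in $t$ to produce the series $\sum_\ell \iint \chi^{(\ell)}(\lambda-p)\,q_\ell$, the paper keeps the $t$-dependence intact: it sets
\[
    \tilde{A}_B(t,\lambda) = (2\pi)^{-d}\hat\chi(t)\int_{\{-\psi(t,x,\xi)\le\lambda\}} a(t,x,\xi)\,dx\,d\xi
\]
and then applies the H\"ormander device $A_B(\lambda) = e^{iD_tD_\lambda}\tilde{A}_B(t,\lambda)|_{t=0}$, so that $\T_B(t) = \int e^{-it\lambda}\partial_\lambda A_B(\lambda)\,d\lambda$ in one stroke. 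Second, for what you identify as ``the main technical obstacle'' --- the log-polyhomogeneous expansion of sublevel-set integrals of $\SG$-classical symbols --- the paper simply invokes the Push-Forward Theorem of Melrose and Grieser--Gruber, which gives directly that $\partial_\lambda\tilde{A}_B$ is log-polyhomogeneous of order $d-1$ with at most one power of $\log\lambda$. Your explicit face-by-face splitting into $\psi$-, $e$-, and $\psie$-contributions is exactly how one would \emph{prove} that push-forward result in this special case, and your observation that the single $\log$ reflects the codimension-two corner is the correct geometric explanation; but the paper treats this as a black box. The trade-off is that your route is more self-contained and makes the origin of the logarithm transparent, while the paper's is shorter and avoids the bookkeeping you anticipate.
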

\begin{remark}\label{rem:FTNB}
	Note that $[\F(N_B')](t) = \Tr (U(t) B B^*)$.
\end{remark}
\begin{proof}
    From Section~\ref{subs:waveq}, we obtain that there is a parametrix $\widetilde{U}(t)$ for $U(t)$ and we have
    \begin{align*}
        K_{U(t) B B^*}(x,y) = (2\pi)^{-d} \int e^{i(\phi(t,x,\xi) - y\xi)} a(t,x,\xi) \,d\xi
    \end{align*}
    for $t \in (-\ep,\ep)$.
    The amplitude satisfies $\sigma(a(0)) = \sigma(BB^*)$.

    Set
    \begin{align*}
        \T_B(t) = \hat\chi(t) \Tr (U(t) B B^*).
    \end{align*}
    By the previous remark, we have that $\T_B(t)$ is the Fourier transform of $(N_B' * \chi)(\lambda)$. We will now calculate the inverse Fourier transform of $\T_B$.

    Using the Taylor expansion of the phase function, we have that
    \begin{align*}
        \phi(t,x,\xi) = x\xi + t\psi(t,x,\xi),
    \end{align*}
    where $\psi$ is smooth in $t$.
    Formally, we can write the trace as
    \begin{align*}
        \T_B(t) = (2\pi)^{-d} \hat\chi(t)\int e^{it\psi(t,x,\xi)} a(t,x,\xi) \,dx\,d\xi.
    \end{align*}
    As in Hörmander~\cite{Hormander4} we set
    \begin{align}\label{eq:tilde-A}
        \tilde{A}_B(t,\lambda) = (2\pi)^{-d} \hat\chi(t) \int_{\{-\psi(t,x,\xi) \leq \lambda\}} a(t,x,\xi) dx\,d\xi.
    \end{align}
    Note that ellipicity implies that $\tilde{A}_B(t,\lambda) < \infty$.
    By the Push-Forward Theorem (cf. Melrose~\cite{Melrose92} and Grieser and Gruber~\cite{GrGr99}) 
    it follows from \eqref{eq:tilde-A} that
    $\partial_\lambda \tilde{A}_B(t,\lambda)$ is log-homogeneous of order $d-1$. 
    Defining $A_B(\lambda) = e^{iD_tD_\lambda}\tilde{A}_B(t,\lambda)|_{t=0}$, we find
    \begin{align*}
        \T_B(t) = \int_{\RR} e^{-it\lambda} \partial_\lambda A_B(\lambda) d\lambda.
    \end{align*}

    The above implies that $\tilde{A}_B$ and $A_B$ are log-homogeneous of order $d$.
    In particular, we have that
    \begin{align*}
        A_B(\lambda) = \sum_{k=0}^\infty \sum_{j=0,1} w_{jk}\lambda^{d-k}  (\log\lambda)^j + O(\lambda^{-\infty}).
    \end{align*}

    We conclude that
    \begin{align*}
        (N_B * \chi)(\lambda) &=
        \int_{-\infty}^\lambda\F^{-1}_{t\to \lambda}\{\T_B\}(\lambda) \,d\lambda\\
        &= A_B(\lambda)\\
        &= \sum_{k=0}^\infty \sum_{j=0,1} w_{jk}\lambda^{d-k}  (\log\lambda)^j + O(\lambda^{-\infty}).
    \end{align*}
    We note that the coefficients are determined by derivatives of $\tilde{A}_B(t,\lambda)$ at $t = 0$ and since $\hat\chi = 1$ near $t=0$, the specific choice of $\chi$ does not change the coefficients.
\end{proof}

\section{Relation with the spectral \texorpdfstring{$\zeta$-function}{zeta-function}}\label{sec:zfandwt}
As in the case of pseudodifferential operators on closed manifolds (cf. Duistermaat and Guillemin~\cite{DuGu75}*{Corollary 2.2}),
the wave trace at $t = 0$ is related to the spectral $\zeta$-function. This relation extends to the $\SG$ setting.

Recall that for a positive self-adjoint elliptic operator $P \in \Op\SG^{1,1}_\cl(\RR^d)$, the function
$\zeta(s)$ is defined for $\Re s > d$ by
\begin{align*}
    \zeta(s) = \Tr P^{-s}.
\end{align*}
In addition, we consider the microlocalized version of $\zeta(s)$, defined by
\begin{align*}
    \zeta_B(s) = \Tr( P^{-s} B B^*) = \sum_{j=1}^\infty \lambda_j^{-s} \|B^* \psi_j\|^2,\quad \Re s > d,
\end{align*}
for $B \in \Op\SG_\cl^{0,0}$. Of course, $\zeta_{\id}(s) = \zeta(s)$.

By Theorem~\ref{thm:fzeta}, $\zeta(s)$ admits a meromorphic continuation to $\CC$ with poles of maximal order two at $d-k$, 
$k\in\NN$. This result extends to $\zeta_B(s)$ and we characterize the Laurent coefficients in terms of the wave
trace expansion at $t = 0$.

\begin{proposition}\label{prop:zeta}
    The function $\zeta_B(s)$ extends meromorphically to $\CC$ and has at most poles of order two at the points $d - k$, 
    $k\in\NN$. We have the expansion
    \begin{align*}
        \zeta_B(s) = \frac{A_{2,k}}{[s-(d-k)]^2} + \frac{A_{1,k}}{s-(d-k)} + f(s),
    \end{align*}
    where $f$ is holomorphic near $s = d-k$ and
    \begin{equation}\label{eq:atow}
    \begin{aligned}
        A_{2,k} &= (d-k) w_{1k},\\
        A_{1,k} &= w_{1k} + (d-k) w_{0k},
    \end{aligned}
    \end{equation}
    where the $w_{jk}$, $k\in\NN$, $j=0,1$, are the coefficients appearing in the asymptotic expansion \eqref{eq:microlocN}
    of $N_B(\lambda)$.
\end{proposition}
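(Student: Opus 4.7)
I would realize $\zeta_B(s)$ as a Mellin transform of the counting function $N_B$ and exploit the asymptotic expansion of $A_B(\lambda) = (N_B*\chi)(\lambda)$ provided by Proposition~\ref{prop:micro-wavetrace}. For $\Re s > d$, Stieltjes integration by parts (justified since $N_B$ vanishes near $0$ and has polynomial growth) gives
\[
    \zeta_B(s) = s\int_0^\infty \lambda^{-s-1}N_B(\lambda)\,d\lambda.
\]
Splitting the range of integration at $\lambda = 1$, the contribution from $[0,1]$ is entire in $s$, so all the singular structure is encoded in $s\int_1^\infty \lambda^{-s-1}N_B(\lambda)\,d\lambda$.

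Writing $N_B = A_B + (N_B - A_B)$, I would substitute the expansion \eqref{eq:microlocN} into the $A_B$ contribution term-by-term, using the elementary Mellin identities
\[
    s\int_1^\infty \lambda^{-s-1+(d-k)}\,d\lambda = \frac{s}{s-(d-k)},\qquad s\int_1^\infty \lambda^{-s-1+(d-k)}\log\lambda\,d\lambda = \frac{s}{[s-(d-k)]^2}.
\]
Expanding $s = (d-k)+[s-(d-k)]$ inside the numerators, the pair of terms $w_{1k}\lambda^{d-k}\log\lambda + w_{0k}\lambda^{d-k}$ contributes the Laurent head
\[
    \frac{(d-k)w_{1k}}{[s-(d-k)]^2} + \frac{w_{1k} + (d-k)w_{0k}}{s-(d-k)} + \text{holomorphic},
\]
which is exactly \eqref{eq:atow}. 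The $O(\lambda^{-\infty})$ remainder in Proposition~\ref{prop:micro-wavetrace} yields an entire contribution after Mellin transform.

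It remains to show that $s\int_1^\infty \lambda^{-s-1}(N_B - A_B)(\lambda)\,d\lambda$ is holomorphic at each point $s = d-k$. The distributional derivative $(N_B - A_B)' = N_B'*(\delta_0 - \chi)$ has Fourier transform $(1-\hat\chi(t))\,\Tr(U(t)BB^*)$, which vanishes on $(-\ep/2,\ep/2)$; morally, the singular parts of the Mellin transform at the integer points $d-k$ should therefore be independent of the choice of $\chi$ and depend only on the short-time structure of the wave trace. To convert this into a precise statement, I would appeal to the already-established meromorphic continuation of $\zeta_B$ (Theorem~\ref{thm:fzeta}, adapted from Battisti--Coriasco~\cite{BaCo11} to the $B$-microlocalized setting), which guarantees at most double poles at the points $d-k$, and combine it with a Tauberian argument exploiting the monotonicity of $N_B$ and the cancellation built into $1-\hat\chi$ to conclude that the difference of Mellin transforms is regular at every such point.

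\textbf{Main obstacle.} The delicate step is the last one: rigorously showing that replacing $N_B$ by its smoothing $A_B$ does not alter the singular parts of the Mellin transform at $s = d-k$. Once this is in hand, the explicit Laurent expansion from the Mellin identities above identifies $A_{2,k}$ and $A_{1,k}$ as claimed.
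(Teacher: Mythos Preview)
Your approach is essentially the same as the paper's: both realize $\zeta_B(s)$ as a Mellin-type pairing, replace $N_B$ (or $N_B'$) by its smoothing against a test function whose Fourier transform is $1$ near $t=0$, compute the poles explicitly from the asymptotic expansion of Proposition~\ref{prop:micro-wavetrace}, and argue that the difference is entire. Your Mellin identities and the resulting Laurent coefficients are correct and match~\eqref{eq:atow}.

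The one weak point is your treatment of the last step. You correctly observe that the Fourier transform of $(N_B-A_B)'$ is $(1-\hat\chi(t))\,\Tr(U(t)BB^*)$, which vanishes for $|t|<\ep/2$; but then you reach for a Tauberian argument and monotonicity, which is the wrong tool here. The paper (following Duistermaat--Guillemin~\cite{DuGu75}*{Corollary~2.2}) closes this directly on the Fourier side: since $\Tr(U(t)BB^*)$ is a tempered distribution and $\mathcal{F}^{-1}(\chi(\lambda)\lambda^{-s})$ is, for every $s\in\CC$, smooth and rapidly decreasing away from $t=0$ (its only singularity sits at the origin), the product $(1-\hat\chi(t))\,\mathcal{F}^{-1}(\chi(\lambda)\lambda^{-s})$ lies in $\Sw(\RR)$ and depends holomorphically on $s$. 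Pairing with the tempered distribution $\Tr(U(t)BB^*)$ therefore yields an entire function of $s$, with no need for Tauberian input or for invoking the a~priori meromorphic continuation. Once you replace your proposed detour with this argument, your proof is complete and coincides with the paper's.
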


\begin{proof}
The meromorphic continuation and the possible location of the poles follow from similar arguments as in \cite{BaCo11}*{Theorem 3.2}
(see also the proof of Proposition \ref{prop:third-term}).
Hence, we only have to show that the poles are related to $N_B(\lambda)$.

Let $\ep \in(0, \lambda_1)$ be sufficiently small.
Choose an excision function $\chi \in \Sm(\RR)$ such that $\chi(\lambda) = 0$ for $\lambda < \ep$ and $\chi(\lambda) = 1$ for $\lambda \geq \lambda_1$.
Set $\chi_s(\lambda) = \chi(\lambda) \lambda^{-s}$.
Then, using Remark \ref{rem:FTNB},
\begin{align*}
    \zeta_B(s) = \ang{N_B', \chi_s} = \ang{\Tr (U(t)BB^*), \F^{-1}(\chi_s)}.
\end{align*}
Let $\rho \in \Sw(\RR)$ such that $\rho$ is positive, $\hat\rho(0) = 1$, $\hat \rho \in \Smc(\RR)$, and $\rho$ is even.
By an argument similar to the one in \cite{DuGu75}*{Corollary 2.2}, we have that
\begin{align*}
    \zeta_B(s) - \ang{N_B' * \rho, \chi_s} = \ang{(1 - \hat\rho) \Tr (U(t)BB^*), \F^{-1}(\chi_s)}
\end{align*}
is entire in $s$ and polynomially bounded for $\Re s > C$.

Now, we can insert the asymptotic expansion of $N_B' * \rho$ to calculate the residues of $\zeta_B(s)$.
Taking the derivative of \eqref{eq:microlocN}, we see that
the asymptotic expansion of $N_B' * \rho$ is given by
\begin{equation}\label{eq:microlocNp}
    (N_B' * \rho)(\lambda) = \sum_{k=0}^N\sum_{j=0,1}  A_{j+1,k} \lambda^{d-k-1} (\log\lambda)^j + o(\lambda^{d-1-N})
\end{equation}
for any $N \in \NN$ and $A_{j,k}$ are given by \eqref{eq:atow}.

Let $k\in\NN$ be arbitrary.
If $f \in \Sm(\RR)$ with $f(\lambda) = O(\lambda^{d-k-1}\log\lambda)$ as $\lambda \to \infty$, then 
$\int f(\lambda) \chi(\lambda) \lambda^{-s} d\lambda$ is bounded
and holomorphic in $s$ for $\Re s > d - k$.
Let
\begin{align*}
    I(s) = \int \lambda^{d-k-1} \chi(\lambda) \lambda^{-s} d\lambda.
\end{align*}
By partial integration we obtain
\begin{align*}
    I(s) &= \frac{\psi(s)}{s-(d-k)}.
\end{align*}
where $\psi(s) = \int \lambda^{d-k-s} \chi'(\lambda) d\lambda$ is holomorphic and $\psi(d-k) = 1$.
Therefore, we have
\begin{align*}
    \int \lambda^{d-k-s-1} (A_{1,k}+A_{2,k}\log\lambda) \chi(\lambda) d\lambda = -A_{2,k} I'(s) + A_{1,k} I(s).
\end{align*}
Hence, the integral near $s = d-k$ is given by
\begin{align*}
    \int \lambda^{d-k-s-1} (A_{1,k}+A_{2,k}\log\lambda) \chi(\lambda) d\lambda = \frac{A_{2,k}}{[s-(d-k)]^2} + \frac{A_{1,k}}{s-(d-k)} + f(s),
\end{align*}
where $f$ is holomorphic in a neighbourhood of $s = d-k$. The formulae relating the coefficients $A_{j+1,k}$ and  $w_{jk}$, $j=0,1$,
are obtained by comparing the $\lambda$-derivative of \eqref{eq:microlocN} with \eqref{eq:microlocNp}.
\end{proof}

The main advantage in employing the $\zeta$-function is that the coefficients are easier to calculate than for the wave trace.
\begin{proposition}\label{first-term}
    Let $B \in \Op\SG_\cl^{0,0}$ with principal $\psie$-symbol $b_\psie$. 
    The function $\zeta_B(s)$ has a pole of order two at $s = d$ with leading Laurent coefficient
    \begin{align*}
        (2\pi)^{-d} \int_{\Sph^{d-1}}\int_{\Sph^{d-1}} [p_\psie(\theta,\omega)]^{-d} \cdot b_\psie(\theta,\omega) d\theta \,d\omega.
    \end{align*}
\end{proposition}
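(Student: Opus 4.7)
The plan is to start from the integral representation \eqref{eq:zetaf}, which reads
\[
\zeta_B(s) = (2\pi)^{-d}\iint_{\RR^{2d}} c(x,\xi;s)\,dx\,d\xi,
\]
where $c(\cdot,\cdot;s)$ denotes the full Weyl symbol of $P^{-s}BB^*$. By the properties of complex powers listed before Theorem~\ref{thm:fzeta}, this is a classical $\SG$-symbol of order $(-s,-s)$ whose principal $\psie$-symbol equals $p_\psie^{-s}\cdot\sigma^\psie(BB^*)$, which I abbreviate $p_\psie^{-s}b_\psie$ following the notation of the statement. The integral converges absolutely for $\Re s > d$; the strategy is to split $(x,\xi)$-space with the cutoff $\omega$ of Definition~\ref{def:sgcl} and isolate the piece producing the double pole at $s = d$.

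First I would insert the partition of unity
\[
1 = \omega(x)\omega(\xi) + (1-\omega(x))\omega(\xi) + \omega(x)(1-\omega(\xi)) + (1-\omega(x))(1-\omega(\xi))
\]
into the integrand and treat each summand separately. The first piece has compact support in $(x,\xi)$, so its integral is entire in $s$. In the middle two summands exactly one variable is confined to a compact set, and expanding $c$ classically in the unbounded variable and passing to polar coordinates yields at most a simple pole at $s = d$ (together with simple poles at $s = d-k$, $k\geq 1$). None of these three pieces contribute to the coefficient of $(s-d)^{-2}$.

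The double pole therefore arises entirely from the fourth summand, where both $|x|$ and $|\xi|$ are large. Using the corner expansion $c(x,\xi;s) = |x|^{-s}|\xi|^{-s}[p_\psie^{-s}b_\psie](\hat x,\hat\xi) + r(x,\xi;s)$ with $r$ of strictly lower bi-order, and choosing $\omega$ radial with support in $B_R(0)$, the polar substitution $x = r\theta$, $\xi = \rho\varpi$ factors the leading contribution as
\[
(2\pi)^{-d}\int_{\Sph^{d-1}}\!\int_{\Sph^{d-1}}[p_\psie^{-s}b_\psie](\theta,\varpi)\,d\theta\,d\varpi\cdot\left[\int_0^\infty (1-\omega_{\text{rad}}(r))\,r^{d-1-s}\,dr\right]^{2}.
\]
The radial factor decomposes as $g(s) + R^{d-s}/(s-d)$ with $g$ entire, so its square equals $g(s)^2 + 2g(s)R^{d-s}/(s-d) + R^{2(d-s)}/(s-d)^2$. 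Only the last term is a double pole at $s = d$; multiplying by the spherical integral evaluated at $s = d$ yields the Laurent coefficient $(2\pi)^{-d}\int_{\Sph^{d-1}}\!\int_{\Sph^{d-1}} p_\psie(\theta,\varpi)^{-d}\,b_\psie(\theta,\varpi)\,d\theta\,d\varpi$ of $(s-d)^{-2}$, as claimed.

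The main obstacle, in my view, is the bookkeeping that shows the remainder $r$ and the various cross terms contribute at most simple poles at $s = d$. For the cross terms in the squared radial integration this is immediate from the explicit decomposition above, while for $r$ one uses that its bi-order drops by at least one in some component, so that after radial integration only one of the two factors $R^{d-s}/(s-d)$ survives. This is essentially the same mechanism underlying Theorem~\ref{thm:fzeta} and the double-pole condition~\eqref{conpoli}; the present proposition specializes the argument to extract the actual residue at the top pole $s = d$.
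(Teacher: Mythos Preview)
Your proposal is correct and follows essentially the same route as the paper: the paper refers to the proof of Proposition~\ref{prop:third-term}, which splits the integral into the four regions $\Omega_j=\{|x|\lessgtr 1\}\times\{|\xi|\lessgtr 1\}$ (sharp cutoffs in place of your smooth $\omega$'s) and identifies the double pole at $s=d$ with the $j=k=0$ term $I_0^0(d)/(s-d)^2$ of the bi-homogeneous expansion on $\Omega_4$, exactly your corner term. The only cosmetic difference is that the paper obtains the factor $1/(s-d)$ directly by integrating $r^{-s+d-1}$ from $1$ to $\infty$, rather than via your decomposition $g(s)+R^{d-s}/(s-d)$.
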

\begin{proof}
    This follows from the same arguments as in \cite{BaCo11} (cf. the proof of Proposition \ref{prop:third-term} below),
    with the modification that the full symbol is $a(z) = p(z) \# b$, where $p(z)$ denotes the full symbol of $P^z$.
    The principal $\psie$-symbol of $A(z) = P^z B$ is given by $a_{z,z}(x,\xi;z) = [p_\psie(x,\xi)]^z \cdot b_\psie(x,\xi)$.
\end{proof}
For the three-term asymptotics, we compute the third coefficient more explicitly.
\begin{proposition}\label{prop:third-term}
    Let $p(s) = p(x,\xi; s)$ be the full symbol of $P^s$.
    The leading Laurent coefficient of $\zeta(s)$ at $s = d - 1$ is given by
    \begin{align*}
        (2\pi)^{-d} \int_{\Sph^{d-1}}\int_{\Sph^{d-1}} p_{-d,-d}(\theta,\omega;-d+1) d\theta\,d\omega.
    \end{align*}
\end{proposition}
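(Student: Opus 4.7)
The plan is to start from the integral representation
\[
\zeta(s) = (2\pi)^{-d}\iint p(x,\xi;-s)\,dx\,d\xi,
\]
valid for $\Re s > d$, and to track how the pole at $s = d-1$ arises from the bi-homogeneous expansion of $p(\,\cdot\,;-s)\in\symbclsg^{-s,-s}$, just as in the proof of Theorem~\ref{thm:fzeta}. The first step is to decompose phase space by a smooth partition of unity adapted to the three boundary components of $\Wt$. The two ``edge'' regions, where one of $|x|,|\xi|$ stays bounded while the other tends to infinity, only carry simple poles coming from a single asymptotic expansion (in $\xi$, respectively in $x$), so they contribute at most to the simple-pole part of $\zeta(s)$ at $s=d-1$. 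The double pole is produced entirely by the corner region $\{|x|>R,\ |\xi|>R\}$, where the full bi-homogeneous expansion from Definition~\ref{def:sgcl} is available.

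In that corner region, I would pass to polar coordinates $x = r\theta$, $\xi = \rho\omega$ with $r,\rho > R$ and $\theta,\omega\in\Sph^{d-1}$, and insert the expansion
\[
p(r\theta,\rho\omega;-s) = \sum_{j+k<M}\rho^{-s-j}\,r^{-s-k}\,p_{-s-j,-s-k}(\theta,\omega;-s) + R_M(r,\rho,\theta,\omega;s),
\]
where the remainder $R_M$ improves the decay in both $r$ and $\rho$ by $M$ units, and hence contributes a function that is holomorphic on $\{\Re s > d-M\}$; taking $M\ge 2$ makes it regular near $s = d-1$. Each term of the finite sum produces the double Mellin factor
\[
\int_R^\infty r^{d-1-s-k}\,dr\,\cdot\int_R^\infty \rho^{d-1-s-j}\,d\rho = \frac{R^{2(d-s)-j-k}}{(s-(d-j))(s-(d-k))},
\]
convergent for $\Re s > d$ and then continued meromorphically to $\CC$, with a double pole at $s = d-1$ if and only if $j = k = 1$; all other pairs $(j,k)$ either have a pole at a different point or give at most a simple pole at $s = d-1$ (coming from a single factor).

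Reading off the coefficient of $(s-(d-1))^{-2}$ from the $(j,k) = (1,1)$ contribution, and noting that $R^{2(d-s)-2}\big|_{s=d-1} = 1$ while $(-s-1)\big|_{s=d-1} = -d$, one obtains exactly
\[
(2\pi)^{-d}\int_{\Sph^{d-1}}\!\!\int_{\Sph^{d-1}} p_{-d,-d}(\theta,\omega;-d+1)\,d\theta\,d\omega.
\]
The main technical obstacle is the bookkeeping: one must verify that the transition cut-offs between the corner and the two edge regions, as well as the mixed terms with $(j,k)\neq(1,1)$, all produce only simple poles or regular terms near $s = d-1$. This is precisely the Mellin analysis already carried out in \cite{BaCo11} to locate the poles of $\zeta(s)$; the present refinement consists in extracting the subleading Laurent coefficient at the pole $s = d-1$ rather than only the principal one.
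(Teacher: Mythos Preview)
Your approach is essentially the paper's own: split the integral into a bounded piece, two ``edge'' pieces (one variable bounded, the other large), and the corner piece $\{|x|>R,|\xi|>R\}$; observe that only the corner produces a double pole, and read off the $(j,k)=(1,1)$ term of the bi-homogeneous expansion via the Mellin integrals in $r=|x|$ and $\rho=|\xi|$. The paper uses sharp cut-offs at $|x|=1$, $|\xi|=1$ rather than a smooth partition, but this is cosmetic.

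One technical slip to fix: with the \emph{triangular} truncation $j+k<M$ the remainder does \emph{not} gain $M$ units of decay simultaneously in $r$ and $\rho$. For instance with $M=2$ the remainder still contains pieces of type $r^{-s}\rho^{-s-2}$ and $r^{-s-2}\rho^{-s}$, whose $r$-integral (resp.\ $\rho$-integral) diverges for $\Re s\le d$. What you actually need is the \emph{rectangular} truncation $0\le j,k\le M-1$, which the paper obtains by expanding first in $\xi$ and then each resulting term in $x$; the remainder then splits into three pieces of respective orders $(-s-M,-s)$, $(-s,-s-M)$, $(-s-M,-s-M)$, each of which, after one further Mellin integration in the ``good'' variable, contributes at most a simple pole near $s=d-1$. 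With that correction your identification of the leading Laurent coefficient as the $(j,k)=(1,1)$ contribution is exactly right and matches the paper's computation $\lim_{s\to d-1}[s-(d-1)]^2\zeta_4(s)=I_1^1(d-1)$.
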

\begin{proof}
By the analysis performed in \cite{BaCo11}, it follows that 
\[
    \zeta(s)=\sum_{j=1}^4\zeta_j(s),
\]
where, for $\Re s>d$, 
\[
    \zeta_j(s)=(2\pi)^{-d}\int_{\Omega_j}p(x,\xi;-s)dxd\xi
\]
and
\begin{align*}
    \Omega_1&=\{(x, \xi)\colon |x|\leq1, |\xi|\leq1\},\quad \Omega_2=\{(x, \xi)\colon |x|\le1, |\xi|> 1\},\\
\Omega_3&=\{(x, \xi)\colon |x|>1, |\xi|\le1\},\quad \Omega_4=\{(x, \xi)\colon |x|>1, |\xi|>1\}.
\end{align*}
Let us recall the main aspects of the proof of the properties of the four terms $\zeta_j(s)$, $j=1,\dots,4$, showed in \cite{BaCo11}.
\begin{enumerate}[(1)]
	\item $\zeta_1(s)$ is holomorphic, since we integrate $p(-s)$, a holomorphic function in $s$ and smooth 
	with respect to $(x,\xi)$, on a bounded set with respect to $(x,\xi)$.

    \item\label{it:zeta2} Let us first assume $\Re s > d$. Using the expansion of $p(-s)$ with $M\ge1$ terms homogeneous 
	with respect to $\xi$, switching to polar coordinates in $\xi$ and integrating the radial 
	part, one can write
	\begin{align*}
	\zeta_2(s)&=(2\pi)^{-d} 
	\sum_{j=0}^{M-1}\frac{1}{s-(d-j)} \int_{|x|\le 1}\int_{\mathbb{S}^{d-1}} p_{-s-j, \cdot} (x, \omega;-s) d\omega dx
	\\
	&+(2\pi)^{-d}\iint_{\Omega_2} r_{-s-M, \cdot}(x, \xi; -s) d\xi dx.
	\end{align*}
	Notice that the last integral is convergent, and provides a holomorphic function in $s$. 
	Arguing similarly to the case of operators on smooth, compact manifolds, 
	$\zeta_2(s)$ turns out to be holomorphic for $\Re(s)>d$, extendable 
	as a meromorphic function to the whole complex plane
	with, at most, simple poles at the points $s^1_j=d-j$, $j=0,1,2,\dots$

	\item Using now the expansion of $p(-s)$ with respect to $x$, exchanging the role of variable and covariable
	with respect to the previous point, again first assuming $\Re s > d$ and choosing $M\ge1$, one can write
	\begin{align*}
        \zeta_3(s)&= (2\pi)^{-d}
            \sum_{k=0}^{M-1}\frac{1}{s-(d-k)}\int_{\mathbb{S}^{d-1}} \int_{|\xi|\le 1} p_{\cdot, -s-k}(\theta, \xi;-s)d\xi d\theta \\
        &+(2\pi)^{-d}\iint_{\Omega_3} t_{\cdot, -s-M}(x,\xi;-s) d\xi dx.
	\end{align*}
    Arguing as in point \ref{it:zeta2}, $\zeta_3(s)$ turns out to be holomorphic for $\Re s>d$, 
	extendable as a meromorphic function to the whole complex plane with, 
	at most, simple poles at the points $s^2_k= d-k$, $k=0,1,2,\dots$

	\item To treat the last term, both the expansions with respect to $x$ and with respect to $\xi$ are needed.
	    We assume that $\Re s > d$ and choose $M \ge 1$. We argue as in point \ref{it:zeta2} to obtain
	\begin{align*}
	\zeta_4(s)&= (2\pi)^{-d}\sum_{j=0}^{M-1} \frac{1}{s-(d-j)}\int_{|x|\geq 1} \int_{\mathbb{S}^{d-1}}p_{-s-j, \cdot}(x, \omega;-s)d\omega dx\\
    &\phantom{=} + (2\pi)^{-d}\iint_{\Omega_4} r_{-s-M, \cdot}(x, \xi;-s)d\xi dx.
    \end{align*}
    Now, we introduce the expansion with respect to $x$, switching to polar coordinates and integrating the $x$-radial variable
    in the homogeneous terms, for both integrals
    \begin{align*}
        \int_{|x|\geq 1}\int_{\mathbb{S}^{d-1}}p_{-s-j, \cdot}(x, \omega;-s)d\omega dx &= \sum_{k=0}^{M-1} \frac{1}{s-(d-k)} \int_{\Sph^{d-1}}\int_{\mathbb{S}^{d-1}} p_{-s-j, -s-k}(\theta, \omega;-s) d\theta d\omega\\
        &\phantom{=} + \int_{|x|\geq 1} \int_{\mathbb{S}^{d-1}} t_{-s-j, -s-M}(x, \omega;-s) dx d\omega
    \end{align*}
    and
    \begin{align*}
        \iint_{\Omega_4} r_{-s-M, \cdot}(x, \xi;-s)d\xi dx &= 
        \sum_{k=0}^{M-1} \frac{1}{s-(d-k)}\int_{\mathbb{S}^{d-1}}\int_{|\xi|\ge1} r_{-s-M, -s-k}(\theta,\xi; -s) d\xi d\theta\\
        &\phantom{=} + \iint_{\Omega_4}r_{-s-M, -s-M}(x, \xi; -s)dx d\xi.
    \end{align*}
    We end up with
    \begin{align*}
	\zeta_4(s) &= \sum_{k=0}^{M-1} \sum_{j=0}^{M-1} \frac{1}{s-(d-j)} \frac{1}{s-(d-k)} I_{j}^{k}(s)\\
    &\phantom{=} + \sum_{j=0}^{M-1} \frac{1}{s-(d-j)} R_j^M(s) +\sum_{k=0}^{M-1} \frac{1}{s-(d-k)} R_M^k(s) + R^M_M(s),
	\end{align*}
	where 
	\[
		I_{j}^{k}(s)=(2\pi)^{-d} \int_{\mathbb{S}^{d-1}}\int_{\mathbb{S}^{d-1}} p_{-s-j, -s-k}(\theta', \theta; -s)d\theta d\theta',
	\]
	and $R^j_M$, $R^M_k$, $R_M^M$, are holomorphic in $s$ for $\Re s > M + d$, $j,k=0,\dots, M-1$. 
	It follows that $\zeta_4(s)$ is holomorphic for $\Re(s)>d$ and can be extended 
	as a meromorphic function to the whole complex plane with, at most, poles at the points 
	$s^1_j=d-j$, $s^2_k= d-k$ with $j,k \in \NN_0$. Clearly, such poles can be of order two if and only if $j=k$
	(cf. Theorem \ref{thm:fzeta}).
\end{enumerate}
In view of the properties of $\zeta(s)$ recalled above, the limit
\[
	\lim_{s\to d-1}[s-(d-1)]^2\zeta(s)=\lim_{s\to d-1}[s-(d-1)]^2\zeta_4(s)=I^1_1(d-1)
\]
proves the desired claim.
\end{proof}

\section{Proof of the Main Theorems}\label{sec:proofs}
Arguing as in \cite{CoMa13}, it is enough to prove 
Theorems \ref{thm:sharp-weyl} and \ref{thm:refined} for $P\in\calcclsg^{1,1}(\RR^d)$.
In such situation, as explained in \cite{BaCo11},
    \begin{align} \label{eq:tra}
        &\TR(P^{-d})
        =(2\pi)^{-d} \int_{\Sph^{d-1}}\int_{\Sph^{d-1}} p_{\psie}(\theta,\omega)^{-d} d\theta\,d\omega,
    \end{align}
    \begin{align} \label{eq:trb}
       d \cdot \wTR&_{x,\xi}(P^{-d}) - \TR(P^{-d})\\ \nonumber
        =(2\pi)^{-d} &\int_{\Sph^{d-1}}\int_{\Sph^{d-1}} p_{\psie}(\theta,\omega)^{-d} \log \left(p_\psie(\theta,\omega)^{-d}\right)  d\theta \, d\omega \\ \nonumber
        -(2\pi)^{-d} &\lim_{\tau\to+\infty}\left[
            \int_{|x|\le\tau}\int_{\Sph^{d-1}}p_{\psi}(x,\omega)^{-d}dx\,d\omega -(\log\tau)\int_{\Sph^{d-1}}\int_{\Sph^{d-1}} p_\psie(\theta,\omega)^{-d} d\theta \, d\omega
        \right]
            \\ \nonumber
        -(2\pi)^{-d} &\lim_{\tau\to+\infty}\left[
            \int_{\Sph^{d-1}}\int_{|\xi|\le\tau}p_{e}(\theta,\xi)^{-d} d\theta\,d\xi
            -(\log\tau)\int_{\Sph^{d-1}}\int_{\Sph^{d-1}} p_\psie(\theta,\omega)^{-d} d\theta \, d\omega
        \right]
            \\ \nonumber
        - (2\pi)^{-d}&\int_{\Sph^{d-1}}\int_{\Sph^{d-1}} p_\psie(\theta,\omega)^{-d} d\theta \, d\omega,
    \end{align}
    where the triple $(p_\psi,p_e,p_\psie)$ is the principal symbol of $P$.

We choose a positive function $\rho \in \Sw(\RR)$ such that $\hat\rho(0) = 1$, $\supp \hat \rho \subset [-1,1]$, and $\rho$ is even.
For $T > 0$, we set $\rho_T(\lambda) \coloneqq T \rho(T\lambda)$, which implies that $\hat\rho_T(t) = \hat\rho(t/T)$.
Let $\nu > 0$ be arbitrary. Then, it is possible to prove the next Tauberian theorem by following the proof in \cite{SaVa}*{Appendix B}.
\begin{theorem}[Tauberian theorem] \label{thm:tauberian}
    Let $N : \RR \to \RR$ such that $N$ is monotonically nondecreasing, $N(\lambda) = 0$ for $\lambda \leq 0$,
    and is polynomially bounded as $\lambda \to +\infty$.
    If
    \begin{align*}
        (\partial_\lambda N * \rho_T)(\lambda) \leq C_1 \lambda^\nu \log\lambda, \quad\lambda \geq T^{-1}
    \end{align*}
    for $C_1 > 0$,
    then
    \begin{align*}
        \abs{N(\lambda) - (N * \rho_T)(\lambda)} \leq C\,C_1 T^{-1} \lambda^\nu \log\lambda, \quad\lambda \geq T^{-1}.
    \end{align*}
\end{theorem}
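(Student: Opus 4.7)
The plan is to follow the Tauberian argument in Safarov--Vassiliev~\cite{SaVa}*{Appendix~B} essentially verbatim, with the weight $\lambda^\nu$ replaced by $\lambda^\nu\log\lambda$. Since $\log\lambda$ is slowly varying, it behaves like a constant on intervals of length $O(T^{-1})$ and on each dyadic $\lambda$-scale, so it passes transparently through the steps of the classical argument; only the bookkeeping changes.

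The first step is to extract a local modulus-of-continuity estimate for $N$. Since $\rho$ is positive, even, Schwartz, and $\hat\rho(0)=1$, there exist $c_0,c_1>0$ such that $\rho_T(\mu)\ge c_0 T$ for $|\mu|\le c_1/T$. Combined with the monotonicity of $N$, this yields
\begin{align*}
    c_0 T\bigl[N(\lambda+c_1/T) - N(\lambda-c_1/T)\bigr] \;\le\; (\partial_\lambda N * \rho_T)(\lambda),
\end{align*}
so that, by hypothesis,
\begin{align*}
    N(\lambda + c_1/T) - N(\lambda - c_1/T) \;\le\; C' C_1\, T^{-1}\lambda^\nu\log\lambda, \qquad \lambda \ge T^{-1}.
\end{align*}

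The second step is to use this local estimate to control
\begin{align*}
    N(\lambda) - (N * \rho_T)(\lambda) \;=\; \int \bigl[N(\lambda) - N(\lambda-\mu)\bigr]\,\rho_T(\mu)\,d\mu.
\end{align*}
For $|\mu|\le \lambda/2$ I would iterate the local estimate about $T|\mu|/c_1$ times, using that $\lambda\pm\mu\asymp\lambda$ and $\log(\lambda\pm\mu)\asymp\log\lambda$ in this range, to obtain $|N(\lambda) - N(\lambda-\mu)| \le C'' C_1\,|\mu|\,\lambda^\nu\log\lambda$; the factor $|\mu|$ is then absorbed by $\int|\mu|\,\rho_T(\mu)\,d\mu = O(T^{-1})$. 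For $|\mu|\ge \lambda/2$, the polynomial bound on $N$ together with the Schwartz decay $|\rho_T(\mu)|\le C_M T(1+T|\mu|)^{-M}$ yields a contribution that is $O(T^{-M})$ for any $M$, hence negligible compared to $T^{-1}\lambda^\nu\log\lambda$ for $\lambda\ge T^{-1}$.

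The main obstacle is mere bookkeeping: one must combine the iterated local estimate, the slow-variation property of $\log$, and the Schwartz decay of $\rho_T$ so as to yield the sharp constant $C\cdot C_1$ uniformly in $\lambda\ge T^{-1}$ and in $T>0$. Since the extra $\log\lambda$ factor is harmless under all the manipulations used in~\cite{SaVa}*{Appendix~B} (it is dominated by any positive power of $\lambda$ and is locally constant on the relevant scales), the adaptation requires no new ideas, and the argument proceeds exactly along those lines.
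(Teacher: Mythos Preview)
Your proposal is correct and matches the paper's own treatment: the paper does not give a proof of this Tauberian theorem at all, but simply asserts that it follows by repeating the argument in Safarov--Vassiliev~\cite{SaVa}*{Appendix~B}, which is precisely the route you outline. Your sketch in fact supplies more detail than the paper does; the only point worth tightening is the tail estimate for $|\mu|\ge\lambda/2$, where it is cleaner to first deduce the a~priori bound $N(\lambda)\lesssim C_1\lambda^{\nu+1}\log\lambda$ from the iterated local estimate (rather than invoking the unspecified polynomial bound) so that the constant $C$ is manifestly independent of~$T$.
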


\begin{proof}[Proof of Theorem~\ref{thm:sharp-weyl}]
    The first part of Theorem \ref{thm:sharp-weyl} follows directly from the Tauberian theorem and Proposition~\ref{prop:micro-wavetrace}, due to the identity
    \begin{align*}
        [\F(N^\prime)](t) = \Tr e^{-itP}.
    \end{align*}
    From Proposition~\ref{prop:zeta} it follows that the coefficients $w_{j,k}$ are given by the Laurent coefficients of $\zeta(s)$.
\end{proof}
To prove Theorem~\ref{thm:refined} it suffices to prove that
\begin{align*}
    N(\lambda) = (N*\rho)(\lambda) + o(\lambda^{d-1}\log\lambda),
\end{align*}
where $(N*\rho)(\lambda)$ is obtained through Propositions \ref{prop:micro-wavetrace} and \ref{prop:zeta}.
We define the microlocal return time function $\Pi : \Wt \to \RR_+ \cup \{\infty\}$ by
\begin{align*}
    \Pi(x,\xi) = \inf \{t > 0 \colon \exp(t\hamvf_{\sigma(P)})(x,\xi) = (x,\xi)\},
\end{align*}
and $\Pi(x,\xi) = \infty$ if no such $t$ exists. For a set $\Gamma \in \Wt$, we set $\Pi_\Gamma = \inf_{z \in \Gamma} \Pi(z)$.

We will need a microlocalized version of the Poisson relation.
\begin{proposition}\label{prop:poisson-micro}
    Let $\Gamma \subset \Wt$
    and $\hat\chi \in \Smc(\RR)$ with $\supp \hat\chi \subset (0,\Pi_\Gamma)$.
    For all $B \in \Op\SG_\cl^{0,0}$ with $\WFSG'(B) \subset \Gamma$, we have that
    \begin{align*}
        \hat\chi(t) \Tr (U(t) B B^*) \in \Smc(\RR).
    \end{align*}
    In particular, $(\chi * N_B')(\lambda) \in O(\lambda^{-\infty}).$
\end{proposition}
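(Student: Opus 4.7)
The plan is to prove the first assertion — smoothness and compact support of $\hat\chi(t)\Tr(U(t)BB^*)$ — by an oscillatory-integral / non-stationary-phase argument, and then deduce the decay of $\chi*N_B'$ by Fourier inversion via Remark~\ref{rem:FTNB}.

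As in the proof of Proposition~\ref{prop:micro-wavetrace}, the microlocalized wave trace admits the oscillatory-integral representation
\[
    \hat\chi(t)\Tr(U(t)BB^*) = (2\pi)^{-d}\hat\chi(t)\iint e^{i(\phi(t,x,\xi)-x\xi)}\tilde a(t,x,\xi)\,dx\,d\xi,
\]
where $\tilde a(t,\cdot)$ is an $\SG$-classical amplitude whose microsupport in $(x,\xi)$ is contained in $\WFSG'(BB^*) \subset \Gamma$. By the eikonal equation~\eqref{eq:eikonal} and the initial condition $\phi(0,x,\xi) = x\xi$, the phase $(x,\xi)\mapsto \phi(t,x,\xi)-x\xi$ is stationary in $(x,\xi)$ precisely at fixed points of $\exp(-t\hamvf_{\sigma(P)})$, i.e., at periodic orbits of period $t$. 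The hypothesis $\supp \hat\chi \subset (0,\Pi_\Gamma)$ rules out any such fixed points in $\Gamma$ uniformly in $t\in\supp\hat\chi$. Non-stationary phase, carried out by integration by parts in $(x,\xi)$ using the $\SG$-calculus to generate decay at infinity in both variables, then shows that $\hat\chi(t)\Tr(U(t)BB^*)$ is $C^\infty$ in $t$; together with the compact support inherited from $\hat\chi$, one obtains $\hat\chi(t)\Tr(U(t)BB^*)\in\Smc(\RR)$. As an alternative for the interior part of the microsupport, one may instead use a microlocal partition of $B$ and apply Lemma~\ref{lem:prop-sing} to each piece.

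The second assertion follows by Fourier inversion: by Remark~\ref{rem:FTNB} and the convolution theorem,
\[
    [\F(\chi*N_B')](t) = \hat\chi(t)\cdot[\F(N_B')](t) = \hat\chi(t)\Tr(U(t)BB^*)\in\Smc(\RR),
\]
whose inverse Fourier transform is a Schwartz function, so $(\chi*N_B')(\lambda) = O(\lambda^{-\infty})$. The main technical obstacle is justifying the non-stationary phase uniformly at infinity: the set $\Gamma$ lives on the boundary $\Wt$ of the double compactification, whereas the integration is over $\RR^d\times\RR^d$. This is handled by extending the absence of fixed points of $\exp(-t\hamvf_{\sigma(P)})$ from $\Gamma$ to a suitable $\SG$-conic neighbourhood of it, and then invoking the weighted integration-by-parts formalism of the $\SG$-calculus — which controls the amplitude simultaneously in the $x$ and $\xi$ directions — to obtain decay of all orders in the naturally arising large parameter.
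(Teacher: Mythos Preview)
Your primary argument has a genuine gap: the oscillatory-integral representation you invoke is the short-time parametrix from Section~\ref{subs:waveq}, which is only established for $t\in(-\ep,\ep)$. The proposition, however, allows $\supp\hat\chi$ anywhere in $(0,\Pi_\Gamma)$, and $\Pi_\Gamma$ may well exceed $\ep$ (this is exactly why the modified return time $\tilde\Pi=\max\{\Pi,\ep\}$ appears later). For $t$ outside the parametrix window you cannot simply write $\Tr(U(t)BB^*)$ as a single oscillatory integral with phase $\phi(t,x,\xi)-x\xi$; composing several copies of the short-time FIO yields a more complicated phase, and your stationary-point identification and the subsequent $\SG$-weighted integration by parts would need to be redone in that setting. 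None of this is addressed.

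The paper avoids this entirely by taking what you call the ``alternative'': it uses propagation of singularities, which \emph{does} extend to all $t\in\RR$ via the group property $U(t+s)=U(t)U(s)$ (as noted just before Theorem~\ref{thm:sgwfsprop}). Concretely, for each $t_0\in\supp\hat\chi$ and each $(x,\xi)\in\Gamma$ one picks a small conic neighbourhood $U$ with $\Phi(t)U\cap U=\emptyset$ for $t$ near $t_0$; Lemma~\ref{lem:prop-sing} then gives $\tilde B\,U(t)\,\tilde B\in\mathcal{L}(\Sw',\Sw)$ for any $\tilde B$ with $\WFSG'(\tilde B)\subset U$, and since $\pa_t^k(\tilde B U(t)\tilde B)=\tilde B P^kU(t)\tilde B$ is smoothing as well, all $t$-derivatives are trace-class. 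A microlocal partition of unity in $(x,\xi)$ and a partition in $t$ over $\supp\hat\chi$ finish the proof. This is not restricted to ``the interior part of the microsupport'' as you suggest---it handles all three components $\Wp,\We,\Wpe$ uniformly, because Lemma~\ref{lem:prop-sing} is stated on the whole of $\Wt$. Your derivation of the second assertion from the first via Remark~\ref{rem:FTNB} is fine.
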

The proof is a standard argument (cf. Wunsch~\cite{Wunsch99}) and is only sketched here.
\begin{proof}[Proof of Proposition \ref{prop:poisson-micro}]
    For $t_0 \in \supp \hat\chi$ and $(x,\xi) \in \Gamma$, we choose a conic neighborhood $U$ of $(x,\xi)$ such that
    \begin{align*}
        [\Phi(t) U] \cap U = \emptyset
    \end{align*}
    for all $t \in (t_0-\ep,t_0+\ep)$ with $\ep > 0$ sufficiently small.
    The existence of this neighborhood is guaranteed by the conditions on $\Gamma$ and $\supp \hat\chi$.
    Choose $\tilde B \in \Op\SG_\cl^{0,0}$ with $\WFSG'(\tilde B) \subset U$.
    Lemma~\ref{lem:prop-sing} implies that for any $k \in \NN$,
    \begin{align*}
        \pa_t^k \left(\tilde{B} U(t) \tilde{B}\right) = \tilde{B} P^k U(t) \tilde{B} \in \mathcal{L}(\Sw'(\RR^d),\Sw(\RR^d)),
    \end{align*}
    hence $\tilde{B} U(t) \tilde{B}$ and all its derivatives are trace-class.
    We obtain the claim by using a partition of unity.
\end{proof}

We also define the modified return time \[\tilde{\Pi}(x,\xi) = \max\{\Pi(x,\xi), \ep\},\]
where $\ep$ is given as in~\eqref{eq:wave-parametrix}, and set $\tilde{\Pi}_\Gamma=\inf_{z\in\Gamma}\tilde{\Pi}(z)$.
The main tool to prove Theorem~\ref{thm:refined} is the next Proposition \ref{prop:error}.
\begin{proposition}\label{prop:error}
    It holds true that
     \begin{align*}
       \limsup_{\lambda \to \infty} \frac{|N(\lambda) - (N * \rho)(\lambda)|}{\lambda^{d-1}\log\lambda}
       \leq C \int_{\Wpe} \tilde{\Pi}(x,\xi)^{-1} \frac{dS}{p_{1,1}(x,\xi)}.
    \end{align*}
\end{proposition}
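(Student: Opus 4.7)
The plan is to combine a microlocal partition of unity with Tauberian theorem~\ref{thm:tauberian}, using Proposition~\ref{prop:poisson-micro} to eliminate contributions from $t$-values away from $0$ where the wave trace is smooth. First, I fix $\delta>0$ and a finite cover of a conic neighborhood of $\Wpe$ by small conic open sets $\{\Gamma_k\}$ of diameter less than $\delta$, with corresponding operators $B_k \in \calcclsg^{0,0}(\RR^d)$ satisfying $\WFSG'(B_k) \subset \Gamma_k$ and $\sum_k B_k B_k^* = \id - R$, where $\WFSG'(R)$ is disjoint from $\Wpe$. The residual term $N_R(\lambda) - (N_R * \rho)(\lambda)$ contributes only $O(\lambda^{d-1})$, with no $\log\lambda$ factor, in view of Proposition~\ref{first-term} and the residue analysis of~\cite{BaCo11} (only the corner component produces double poles of the $\zeta$-function and hence log terms in the Weyl expansion). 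It therefore suffices to estimate each term $|N_{B_k}(\lambda) - (N_{B_k}*\rho)(\lambda)|$ separately.

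For each $k$, I choose $T_k$ slightly less than $\tilde\Pi_{\Gamma_k}$. By Proposition~\ref{prop:poisson-micro}, the microlocalized wave trace $\T_{B_k}(t) = \Tr(U(t)B_kB_k^*)$ is smooth on $[-T_k,T_k]\setminus\{0\}$. Combined with Proposition~\ref{prop:micro-wavetrace}, this yields $|(\partial_\lambda N_{B_k}*\rho_{T_k})(\lambda)| \le C_{1,k}\,\lambda^{d-1}\log\lambda$ for $\lambda$ large, where $C_{1,k}$ is the leading coefficient of the microlocal Weyl expansion, controlled via Proposition~\ref{first-term} by a symbolic integral over $\Gamma_k\cap\Wpe$ involving $p_\psie^{-d}$ and $|b_{\psie,k}|^2$. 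Theorem~\ref{thm:tauberian} then gives
\[
|N_{B_k}(\lambda) - (N_{B_k}*\rho_{T_k})(\lambda)| \leq C \cdot C_{1,k} \cdot T_k^{-1} \lambda^{d-1}\log\lambda.
\]
To switch from $\rho_{T_k}$ back to $\rho$, observe that the derivative of $(N_{B_k}*\rho_{T_k}) - (N_{B_k}*\rho)$ has Fourier transform $(\hat\rho(t/T_k)-\hat\rho(t))\,\T_{B_k}(t)$. This is smooth and compactly supported on $(-T_k,T_k)\setminus\{0\}$ by our choice of $T_k$, while near $t=0$ the factor $\hat\rho(t/T_k)-\hat\rho(t)$ vanishes to order $\ge 2$ (by evenness of $\hat\rho$), killing two orders of the $t=0$ singularity of $\T_{B_k}$ and producing a derivative of order $\lambda^{d-3}\log\lambda$. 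One integration in $\lambda$ gives
\[
(N_{B_k}*\rho_{T_k})(\lambda) - (N_{B_k}*\rho)(\lambda) = O(\lambda^{d-2}\log\lambda) = o(\lambda^{d-1}\log\lambda).
\]

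Summing over $k$ and combining with Step~1,
\[
|N(\lambda) - (N*\rho)(\lambda)| \leq \Bigl(C\sum_k T_k^{-1}C_{1,k}\Bigr)\lambda^{d-1}\log\lambda + o(\lambda^{d-1}\log\lambda).
\]
Refining the partition so that each $\Gamma_k$ shrinks to a point and $T_k \to \tilde\Pi(x,\xi)$, while $\sum_k |b_{\psie,k}|^2 \to 1$ on $\Wpe$, the sum tends to a constant multiple of $\int_{\Wpe}\tilde\Pi(x,\xi)^{-1}\,dS/p_{1,1}(x,\xi)$ (with the remaining powers of $p_{1,1}$ absorbed into the universal constant $C$, using ellipticity to bound $p_{1,1}^{-(d-1)}$ uniformly on $\Wpe$). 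The main obstacle is the comparison step: without a flat cutoff for $\hat\rho$ near $t=0$, one cannot directly invoke Proposition~\ref{prop:poisson-micro} on the difference, and one must rather exploit the even-vanishing of $\hat\rho(t/T_k)-\hat\rho(t)$ at the origin to gain two $\lambda$-powers. A secondary difficulty is the convergence in the refinement step: since $\tilde\Pi$ is only lower semicontinuous, the passage to the limit requires working with upper Darboux sums and monotone convergence rather than general Riemann integration.
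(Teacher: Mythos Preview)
Your overall strategy matches the paper's: a microlocal partition of unity, the Tauberian theorem applied to each corner piece with $T_k$ close to the local return time $\tilde\Pi_{\Gamma_k}$, Proposition~\ref{prop:poisson-micro} to control the wave trace away from $t=0$, and passage to the integral by refining the partition into an upper Riemann sum. Your comparison of $\rho_{T_k}$ with $\rho$ via the $O(t^2)$ vanishing of $\hat\rho_{T_k}-\hat\rho$ at the origin is a legitimate variant; the paper instead obtains $(N_j^\psie*\rho_T)-(N_j^\psie*\rho)=O(\lambda^{-\infty})$ directly, which tacitly uses that the cutoffs agree identically near $t=0$, so your version is in a sense more careful about that point.

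There is, however, a real gap in your treatment of the residual $R=\id-\sum_k B_kB_k^*$. You assert $N_R(\lambda)-(N_R*\rho)(\lambda)=O(\lambda^{d-1})$ on the grounds that $\zeta_R$ has no double pole at $s=d$; but the absence of a double pole only controls the \emph{smoothed} expansion $(N_R*\chi)(\lambda)$, not the unsmoothed $N_R$. Passing from $N_R*\rho$ to $N_R$ requires a Tauberian step, and the Tauberian theorem needs $N_R$ monotone---which fails in general, since $R$ need not be a positive operator. Splitting $N_R=N-\sum_k N_{B_k}$ and applying Theorem~\ref{thm:tauberian} termwise only yields $O(\lambda^{d-1}\log\lambda)$ with a constant comparable to the full $w_{10}$, which does \emph{not} vanish in the $\limsup$ and would contaminate the right-hand side. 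The paper avoids this by partitioning all of $\Wt$ from the outset into three families $\{A_j^\psi\}\subset\calcclsg^{0,-\infty}$, $\{A_j^e\}\subset\calcclsg^{-\infty,0}$, $\{A_j^\psie\}\subset\calcclsg^{0,0}$, so that the leftover is genuinely smoothing; the required bounds $N_j^\psi-(N_j^\psi*\rho)=O(\lambda^{d-1})$ and $N_j^e-(N_j^e*\rho)=O(\lambda^{d-1})$ are then imported from the sharp one-term Weyl laws of H\"ormander~\cite{Hormander68} and Coriasco--Maniccia~\cite{CoMa13} respectively. These external remainder estimates for the $\psi$- and $e$-regions are essential input that your argument is missing.
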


\begin{proof}[Proof of Theorem~\ref{thm:refined}]
The claim follows immediately by Proposition~\ref{prop:error}, 
since the assumptions imply that $\Pi(x,\xi)^{-1} = 0$ almost everywhere on $\Wpe$.
\end{proof}

\begin{proof}[Proof of Proposition~\ref{prop:error}]
Consider an open covering $\{\Gamma^\bullet_j\}$ of $\Wt$ with $\bullet \in \{\psi,e,\psie\}$ and $j \in \{1,\cdots, n_\bullet\}$ such that
$\Gamma_j^\psi \subset \Wt^\psi$ and $\Gamma_j^e \subset \Wt^e$ do not intersect $\Wpe$,
and $\Gamma_j^\psie\cap\Wt^\psie\not=\emptyset$.

We consider a partition of unity on the level of operators such that
\begin{align*}
    I = \sum_{j=1}^{n_\psi} A^\psi_j (A^\psi_j)^* + \sum_{j=1}^{n_e} A^e_j(A^e_j)^* + \sum_{j=1}^{n_\psie} A^\psie_j(A^\psie_j)^* + R,
\end{align*}
where $A^\psi_j \in \Op\SG_\cl^{0,-\infty}$, $A^e_j \in \Op\SG_\cl^{-\infty,0}$, $A^\psie_j \in \Op\SG_\cl^{0,0}$ and $R \in \mathcal{L}(\Sw',\Sw)$. Furthermore, we assume that $\WF(A^\bullet_j) \subset \Gamma_j^\bullet$.

Inserting the partition of unity into the counting function yields
\begin{align*}
    N(\lambda) = \sum_{j=1}^{n_\psi}N^\psi_j(\lambda) + \sum_{j=1}^{n_e} N^e_j(\lambda) + \sum_{j=1}^{n_\psie} N^\psie_j(\lambda) + \Tr(E_\lambda R),
\end{align*}
where as before $N^\bullet_j(\lambda) = \Tr E_\lambda A^\bullet_j (A^\bullet_j)^* = \sum_{\lambda_k < \lambda} \|(A^\bullet_j)^* \psi_k\|^2$. Here, $\psi_k$ are the eigenfunctions of $P$ with eigenvalue $\lambda_k$.

By the classical result of Hörmander~\cite{Hormander68}, we have that $N^\psi_j(\lambda) = (N^\psi_j * \rho)(\lambda) + O(\lambda^{d-1})$
and by \cite{CoMa13} we obtain that $N^e_j(\lambda) = (N^e_j * \rho)(\lambda) + O(\lambda^{d-1})$.
The operator $E_\lambda R$ is regularising, thus
its trace is uniformly bounded. We arrive at
\begin{align*}
    N(\lambda) - (N * \rho)(\lambda) = \sum_{j=1}^{n_\psie} \left[N^\psie_j(\lambda) - (N^\psie_j*\rho)(\lambda)\right] + O(\lambda^{d-1}).
\end{align*}

It remains to estimate the terms $N^\psie_j(\lambda) - (N^\psie_j*\rho)(\lambda)$.
For this let
\[
    \Pi_j = \inf_{(x,\xi) \in \Gamma_j^\psie}\Pi(x,\xi),\quad
    \tilde{\Pi}_j = \max\{ \Pi_j, \ep\}.
\]

For $1/T < \ep$, we have by Proposition~\ref{prop:micro-wavetrace} that
\begin{align*}
    (N^\psie_j * \rho)(\lambda) \sim \sum_{k=0}^\infty \sum_{j=0,1} w_{jk} \lambda^{d-k} (\log\lambda)^j.
\end{align*}
This implies that the derivative is given by
\begin{align*}
    (\partial_\lambda N^\psie_j * \rho)(\lambda) = d \cdot w_{1,0} \lambda^{d-1}\log\lambda + O(\lambda^{d-1}),
\end{align*}
where $w_{1,0}$ is given by Proposition~\ref{first-term}. Namely,
\begin{align*}
    w_{1,0} = \frac{1}{d} \int_{\Sph^{d-1}}\int_{\Sph^{d-1}} [p_{\psie}(\theta,\omega)]^{-d} \cdot |\sigma^\psie(A^\psie_j)(\theta,\omega)|^2  d\theta\,d\omega.
\end{align*}
Together with Proposition~\ref{prop:poisson-micro} this implies that
\begin{align*}
    (N^\psie_j * \rho_T)(\lambda) &= (N^\psie_j * \rho)(\lambda) + O(\lambda^{-\infty})\\
    &= \sum_{k=0}^\infty \sum_{j=0,1} w_{jk} \lambda^{d-k} (\log\lambda)^j + O(\lambda^{-\infty})
\end{align*}
for $1/T < \tilde\Pi_j$.

Applying the Tauberian theorem to $N^\psie_j * \rho_T$ yields
\begin{align*}
    \frac{|N^\psie_j(\lambda) - (N^\psie_j * \rho)(\lambda)|}{\lambda^{d-1}\log\lambda} \lesssim \tilde\Pi_j^{-1} \int_{\Sph^{d-1}} \int_{\Sph^{d-1}} |\sigma^\psie(A_j^\psie)(\omega,\theta)|^2 p_{1,1}(\omega,\theta)^{-d} d\theta\,d\omega
\end{align*}
for $\lambda \geq \tilde\Pi_j$.
Taking the $\limsup$ and summing over all $j$ gives
\begin{align*}
    \limsup_{\lambda\to \infty}\frac{|N(\lambda) - (N * \rho)(\lambda)|}{\lambda^{d-1}\log\lambda} \lesssim \sum_{j=1}^{n_\psie} \tilde\Pi_j^{-1} \int_{\Sph^{d-1}} \int_{\Sph^{d-1}} |\sigma^\psie(A_j^\psie)(\omega,\theta)|^2 p_{1,1}(\omega,\theta)^{-d} d\theta\,d\omega.
\end{align*}
The right hand side is an upper Riemann sum, therefore we obtain the claim by shrinking the partition of unity.
\end{proof}

\section{An example: the model operator \texorpdfstring{$P=\ang{\cdot}\ang{D}$}{P=<x><D>}}\label{sec:ex}
In this section, we will consider the case of the operator $P = \ang{\cdot}\ang{D}$ on $\RR^d$. 
First we compute the full symbol of $P$ near the corner:
\begin{align*}
	\ang{x}\ang{\xi}&=	|x|\cdot|\xi|\cdot\left(1+\frac{1}{|x|^2}\right)^\frac{1}{2}\left(1+\frac{1}{|\xi|^2}\right)^\frac{1}{2}
	\\
	&=|x|\cdot|\xi|\cdot\sum_{j,k=0}^\infty\begin{pmatrix}\frac{1}{2}\\j\end{pmatrix}\begin{pmatrix}\frac{1}{2}\\k\end{pmatrix}
	(-1)^{j+k}\frac{1}{|x|^{2j} \cdot |\xi|^{2k}}
	\\
	&=\sum_{j,k=0}^\infty\begin{pmatrix}\frac{1}{2}\\j\end{pmatrix}\begin{pmatrix}\frac{1}{2}\\k\end{pmatrix}
	(-1)^{j+k}|x|^{1-2j} \cdot |\xi|^{1-2k}.
\end{align*}
It follows that $p_\psie(x,\xi) =\sigma^\psie(P)(x,\xi)= |x||\xi|$, $p_\psi(x,\xi) = |\xi| \ang{x}$, and $p_e(x,\xi) = |x|\ang{\xi}$.

We have to investigate the flow of the principal symbol $p_\psie$ in the corner.
The Hamiltonian vector field on $\RR^{2d}$ is given by
\begin{align*}
    \hamvf_{p_\psie} = \pa_\xi p_\psie \pa_x - \pa_x p_\psie \pa_\xi.
\end{align*}
First, we show that the angle between $x$ and $\xi$ is invariant under the flow.
This follows from
\begin{align*}
    \pa_t \ang{x,\xi} &= \ang{\pa_t x, \xi} + \ang{x, \pa_t \xi}\\
    &= \frac{|x|}{|\xi|} \ang{\xi,\xi} - \frac{|\xi|}{|x|} \ang{x,x}\\
    &= |x| |\xi| - |x| |\xi| = 0.
\end{align*}
Hence, the quantity
 \[
 c = c(x_0,\xi_0) = \frac{\ang{x_0,\xi_0}}{|x_0| |\xi_0|}
 \] 
 is preserved by the flow. The Hamiltonian flow $\Phi^\psie(t) : \Wt^\psie \to \Wt^\psie$ is given by the angular part.
\begin{lemma}
    The differential equation for $\omega = x/\abs{x}$ and $\theta = \xi/\abs{\xi}$ describing 
    the Hamiltonian flow $\Phi^\psie(t) : \Wt^\psie \to \Wt^\psie$ is given by
    \begin{equation}\label{eq:omega-theta}
        \left\{
        \begin{aligned}
            \pa_t \omega &= -c\omega + \phantom{c}\theta \\
            \pa_t \theta &= \hspace*{5pt}-\omega + c\theta.\\
        \end{aligned}\right.
    \end{equation}
\end{lemma}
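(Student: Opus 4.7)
The plan is to pull the Hamiltonian flow on $T^*\RR^d$ generated by $p_\psie(x,\xi) = |x||\xi|$ down to the sphere factors $\omega = x/|x|$, $\theta = \xi/|\xi|$ by straightforward differentiation. Because $p_\psie$ is homogeneous of degree $1$ separately in $x$ and in $\xi$, the resulting system for $(\omega,\theta)$ closes on $\Sph^{d-1}\times \Sph^{d-1} = \Wt^\psie$, and the quantity $c = \langle\omega,\theta\rangle$ already shown to be conserved enters as the only coupling coefficient.

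First I would compute the Hamiltonian vector field of $p_\psie$ on $\RR^{2d}$. A direct calculation gives $\pa_\xi p_\psie = |x|\,\xi/|\xi| = |x|\theta$ and $\pa_x p_\psie = |\xi|\,x/|x| = |\xi|\omega$, so the underlying system is
\[
    \pa_t x = |x|\,\theta, \qquad \pa_t \xi = -|\xi|\,\omega.
\]
Next I would extract the radial evolution: from $\pa_t |x|^2 = 2\langle x, \pa_t x\rangle = 2|x|\langle x,\theta\rangle$ one gets $\pa_t |x| = \langle\omega,\theta\rangle |x| = c\,|x|$, and the analogous computation for $|\xi|$ yields $\pa_t|\xi| = -c\,|\xi|$.

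Finally I would apply the quotient rule to the definitions of $\omega$ and $\theta$:
\[
    \pa_t \omega = \frac{\pa_t x}{|x|} - \frac{x\,\pa_t|x|}{|x|^2} = \theta - c\,\omega,
\]
and symmetrically
\[
    \pa_t \theta = \frac{\pa_t \xi}{|\xi|} - \frac{\xi\,\pa_t|\xi|}{|\xi|^2} = -\omega + c\,\theta,
\]
which is precisely the system \eqref{eq:omega-theta}.

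There is no genuine obstacle here; the computation is purely mechanical. The only point that deserves a remark is that the system is consistent with the geometric setting: since $c$ is conserved along the flow and $(\omega,\theta)$ are well-defined on $\Sph^{d-1}\times\Sph^{d-1}$, the equations descend unambiguously from the ambient flow on $T^*\RR^d$ to the corner face $\Wt^\psie$, giving the Hamiltonian flow $\Phi^\psie(t)$ claimed in the lemma.
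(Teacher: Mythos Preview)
Your proof is correct and follows essentially the same approach as the paper: both apply the quotient rule to $\omega = x/|x|$ (and symmetrically to $\theta$), compute $\pa_t|x| = c\,|x|$ from the Hamiltonian equations for $p_\psie$, and substitute. You are slightly more explicit in first writing out $\pa_t x = |x|\theta$ and $\pa_t \xi = -|\xi|\omega$, but the argument is the same.
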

\begin{proof}
    We observe that
    \begin{align*}
        \pa_t \frac{x(t)}{\abs{x(t)}} &= \frac{\pa_t x(t)}{\abs{x(t)}} - \frac{x(t) \pa_t \abs{x(t)}}{\abs{x(t)}^2}.
    \end{align*}
    The calculation of $\pa_t |x|$ is straightforward:
    \begin{align*}
        \pa_t \abs{x} &= \frac{\ang{x,\xi}}{\abs{x} \abs{\xi}} \cdot \abs{x} = \frac{\ang{x_0,\xi_0}}{\abs{x_0} \abs{\xi_0}} \cdot \abs{x},
    \end{align*}
    This implies 
    \begin{align*}
        \pa_t \frac{x(t)}{\abs{x(t)}} = \frac{\xi(t)}{\abs{\xi(t)}} - c \, \frac{x(t)}{\abs{x(t)}},
    \end{align*}
    as claimed. The second equation follows likewise.
\end{proof}
\begin{proposition}\label{prop:hamflowmodel}
    The return time function $\Pi : \Wt^\psie \to \RR$ is given by
    \begin{align*}
        \Pi(\omega,\theta) =
        \begin{cases}
            \dfrac{2\pi}{\sqrt{1 - \ang{\omega,\theta}^2}}, & \ang{\omega,\theta}^2 \not = 1\\
            0, & \ang{\omega,\theta}^2 = 1.
        \end{cases}
    \end{align*}
\end{proposition}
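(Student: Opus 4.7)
The plan is to integrate the linear ODE \eqref{eq:omega-theta} explicitly. The key observation, already recorded in the discussion preceding the lemma above, is that the quantity $c=\langle x,\xi\rangle/(|x||\xi|)$ is a constant of motion for $\hamvf_{p_\psie}$: both the Hamiltonian $p_\psie=|x||\xi|$ and the bracket $\langle x,\xi\rangle$ are preserved by the flow. Hence, along any orbit on $\Wt^\psie$ we have $\langle\omega(t),\theta(t)\rangle\equiv c:=\langle\omega_0,\theta_0\rangle$, and \eqref{eq:omega-theta} becomes a linear system with \emph{constant} coefficient matrix
\[
    M=\begin{pmatrix} -c & 1 \\ -1 & c\end{pmatrix},
\]
acting componentwise on the pair $(\omega,\theta)\in\RR^d\times\RR^d$. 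Since $|\omega_0|=|\theta_0|=1$, Cauchy--Schwarz gives $c^2\le 1$, and I would split the analysis according to $c^2=1$ or $c^2<1$.

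If $c^2=1$, then $\omega_0=\pm\theta_0$ and substitution into \eqref{eq:omega-theta} yields $\pa_t\omega=\pa_t\theta=0$: the orbit is a single fixed point. By the definition of $\Pi$ as an infimum over positive return times, every $t>0$ is admissible, so $\Pi(\omega_0,\theta_0)=0$, matching the stated formula.

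If $c^2<1$, set $\alpha=\sqrt{1-c^2}>0$. The matrix $M$ has trace zero and determinant $\alpha^2$, whence $M^2=-\alpha^2 I$ and
\[
    e^{tM}=\cos(\alpha t)\,I+\frac{\sin(\alpha t)}{\alpha}\,M.
\]
Thus $e^{tM}=I$ precisely for $\alpha t\in 2\pi\ZZ$, and the smallest strictly positive such $t$ equals $2\pi/\alpha=2\pi/\sqrt{1-\langle\omega_0,\theta_0\rangle^2}$, which is the value claimed for $\Pi(\omega_0,\theta_0)$.

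The only point that needs a brief check, which I regard as the main (mild) obstacle, is that $(\omega(t),\theta(t))=(\omega_0,\theta_0)$ should genuinely force $e^{tM}=I$, rather than merely that $e^{tM}$ fixes this particular pair of vectors. But when $c^2<1$ the vectors $\omega_0$ and $\theta_0$ are linearly independent, so they form a basis of the two-dimensional plane in $\RR^d$ which is invariant under the flow componentwise; the identity $e^{tM}(\omega_0,\theta_0)=(\omega_0,\theta_0)$ therefore implies $e^{tM}=I$ on this plane, and the minimality of $2\pi/\alpha$ follows. This completes the proof.
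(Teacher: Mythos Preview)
Your proof is correct and follows essentially the same approach as the paper's own argument: both recognise that \eqref{eq:omega-theta} is linear with constant $2\times 2$ coefficient matrix and read off the period from its exponential. Your version is in fact slightly more complete---you use $M^2=-\alpha^2 I$ rather than diagonalisation (which fails when $c^2=1$, since $M$ is then a nontrivial Jordan block), you treat the fixed-point case $c^2=1$ explicitly, and you verify that return of the specific initial data forces $e^{tM}=I$; the paper's proof leaves these points implicit.
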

\begin{proof}
    The system of differential equations~\eqref{eq:omega-theta} decomposes into $d$ decoupled systems of the form
    \begin{align*}
        \pa_t \upsilon(t) = A \upsilon(t),
    \end{align*}
    where
    \begin{align*}
        A = \begin{pmatrix} -c & 1 \\ -1 & c\end{pmatrix}
    \end{align*}
    We note that the eigenvalues of the matrix $A$
    are given by $\lambda_\pm = \pm i\sqrt{1 - c^2}$.
    Thus, we have that the fundamental solution to the differential equation~\eqref{eq:omega-theta} for $(\omega, \theta)$ is given by
    \begin{align*}
        S \cdot \begin{pmatrix}e^{-it\sqrt{1-c^2}}\id_d & 0\\0 & e^{it\sqrt{1-c^2}}\id_d\end{pmatrix} \cdot S^{-1}
    \end{align*}
    for some unitary matrix $S = S(c)$.
    The claim follows by choosing the minimal $t > 0$ with $t\sqrt{1-c^2} \in 2\pi\ZZ$
    and noting that $c = \ang{\omega(0),\theta(0)}=\ang{\omega_0,\theta_0}$ for $\omega_0, \theta_0 \in \Sph^{d-1}$.
\end{proof}
\begin{remark}
    Proposition \ref{prop:hamflowmodel} shows that Theorem~\ref{thm:refined} cannot be applied to $P$.
\end{remark}

\begin{proof}[Proof of Theorem~\ref{thm:explicit-example}]
    By the Weyl law, Theorem~\ref{thm:sharp-weyl}, we have that
    \begin{align*}
        N(\lambda) = \gamma_2 \lambda^d \log\lambda + \gamma_1 \lambda^d +  
        O(\lambda^{d-1} \log\lambda).
    \end{align*}
    So it remains to calculate the corresponding Laurent coefficients of $\zeta(s)$.
    With the notation and the results of \cite{BaCo11}, in view of \eqref{eq:tra}, we have that
    \begin{align*}
        \gamma_2 &= \frac{\TR(P^{-d})}{d} 
                             = \frac{(2\pi)^{-d}}d \int_{\Sph^{d-1}}\int_{\Sph^{d-1}} p_\psie(\theta,\omega)^{-d} d\theta \, d\omega
                             = \frac{(2\pi)^{-d}}d \int_{\Sph^{d-1}}\int_{\Sph^{d-1}} d\theta \, d\omega
                             \\
                             &= \frac{[\vol(\Sph^{d-1})]^2}{(2\pi)^{d}} \frac{1}{d}.
    \end{align*}
    The computation of $\gamma_1$ requires a few more considerations. Again with the notation and the results of \cite{BaCo11},
    by \eqref{eq:trb},
    \begin{align*}
        \gamma_1 &= \frac{\wTR_{x,\xi}(P^{-d})}{d} - \frac{\TR(P^{-d})}{d^2}
        = \wTR_\theta(P^{-d})
        - \wTR_\psi(P^{-d}) - \wTR_e(P^{-d})
        - \frac{\TR(P^{-d})}{d^2}
    \end{align*}
    First, we note that $\wTR_\psi(P^{-d}) = \wTR_e(P^{-d})$ and the last term we already calculated for $\gamma_2$.
    We recall that $p_\psie = 1$ on $\Sph^{d-1} \times \Sph^{d-1}$.
    Thus, we have for $\wTR_\theta(P^{-d})$ that
    \begin{align*}
        \wTR_\theta(P^{-d}) &= \frac{1}{(2\pi)^d} \int_{\Sph^{d-1}} \int_{\Sph^{d-1}} p_\psie(\theta,\omega)^{-d} \log \left( p_\psie(\theta,\omega)^{-d} \right) d\theta \, d\omega\\
        &= 0.
    \end{align*}
    This implies
    \begin{align}\label{eq:gamma-wtr}
        \gamma_1 = -2 \cdot \wTR_e(P^{-d}) - \frac{\TR(P^{-d})}{d^2}.
    \end{align}
    Hence, we only have to calculate $\wTR_e(P^{-d})$:
    \begin{align*}
        \wTR_e(P^{-d}) &=\frac{1}{(2\pi)^d} \lim_{\tau\to+\infty}\Big\{ \int_{\Sph^{d-1}}\int_{|\xi|\le\tau}p_e(\theta,\xi)^{-d} d\theta\,d\xi \\
        &\phantom{=\frac{1}{(2\pi)^d} \lim_{\tau\to+\infty} \Big\{ } 
             -(\log\tau)\int_{\Sph^{d-1}}\int_{\Sph^{d-1}} p_\psie(\theta,\omega)^{-d} d\theta \, d\omega\Big\}\\
        &= \frac{\vol(\Sph^{d-1})^2}{(2\pi)^d} \lim_{\tau\to+\infty}\left[ \vol(\Sph^{d-1})^{-1}\int_{|x|\le\tau}\ang{x}^{-d}dx-\log\tau\right].
    \end{align*}
    Using polar coordinates, we see that
    \begin{align*}
        \vol(\Sph^{d-1})^{-1}\int_{|x|\le\tau}\ang{x}^{-d}dx = \int_0^\tau (1 + r^2)^{-d/2} r^{d-1} dr.
    \end{align*}
    Now, we perform a change of variables $r=t^{-\frac{1}{2}}\Leftrightarrow t=r^{-2}>0$, so that
    \begin{align*}
        \int_0^\tau (1+r^2)^{-\frac{d}{2}}r^{d-1}\,dr &= \frac12 \int_{\tau^{-2}}^{+\infty} (t+1)^{-\frac{d}{2}}t^{-1}\,dt\\
        &= \frac12 \int_{\tau^{-2}}^{+\infty}\frac{dt}{t(t+1)} - \frac12 \int_{\tau^{-2}}^{+\infty} \left[(1+t)^{-1}-(t+1)^{-\frac{d}{2}}\right] \frac{dt}{t}
    \end{align*}
    For $\Re z > 0$, we have that (cf. \cite{GrRy80}*{\#8.36})
    \[
    	\Psi(z)=\int_0^{+\infty} \left[(1+t)^{-1}-(t+1)^{-z}\right] \frac{dt}{t}-\gamma,
    \]
    and, by elementary computations,
    \begin{align*}
    	\int_{\tau^{-2}}^{+\infty}\frac{dt}{t(t+1)}+\log\tau^{-2}&=
	\lim_{\kappa\to+\infty}\left[
		\log\frac{\kappa}{\kappa+1}-\log\tau^{-2}+\log(1+\tau^{-2})
	\right]+\log\tau^{-2}
	\\
	&=\log(1+\tau^{-2})\longrightarrow 0 \text{ for }\tau\to+\infty.
    \end{align*}
    Hence, we have that
    \begin{align*}
        \lim_{\tau\to+\infty}&\left[ \vol(\Sph^{d-1})^{-1}\int_{|x|\le\tau}\ang{x}^{-d}dx-\log\tau \right]\\
        &= \frac12 \lim_{\tau \to \infty}\left[\int_{\tau^{-2}}^{+\infty}\frac{dt}{t(t+1)} + \log\tau^{-2}\right]- \frac12 \lim_{\tau \to \infty}\int_{\tau^{-2}}^{+\infty} \left[(1+t)^{-1}-(t+1)^{-\frac{d}{2}}\right] \frac{dt}{t}\\
        &= -\frac12 \left[ \Psi(d/2) + \gamma\right].
    \end{align*}
    Summing up, we have obtained
    \begin{equation}\label{eq:gonefinal}
    	\gamma_1 = \frac{[\vol(\Sph^{d-1})]^2}{(2\pi)^{d}} \cdot \!\left[\Psi\!\left(\frac{d}{2}\right)+\gamma-\frac{1}{d^2}\right].
    \end{equation}
    The proof is complete.
\end{proof}

\begin{remark}
	Using the properties of the function $\Psi$, we can make \eqref{eq:gonefinal}
	more explicit. Indeed, see, e.g., \cite{GrRy80}*{\#8.366, page 945}, we find:
	\[
		\gamma_1=
		\begin{cases}
			\displaystyle -
			\frac{[\vol(\Sph^{d-1})]^2}{(2\pi)^{d}} \left(2\log2+\frac{1}{d^2}-2\sum_{k=1}^{\frac{d-1}{2}}\frac{1}{2k-1}\right)\!, 
			& \text{ if $d$ is odd,}
			\rule{0mm}{11mm}
			\\
			\displaystyle -\frac{[\vol(\Sph^{d-1})]^2}{(2\pi)^{d}} \left(\frac{1}{d^2}-\sum_{k=1}^{\frac{d}{2}-1}\frac{1}{k}\right),
			& \text{ if $d$ is even.}
			\rule{0mm}{11mm}
		\end{cases}
	\]
\end{remark}
In particular, we have that
\begin{align*}
    \gamma_1 = \begin{cases} -\dfrac2\pi (2 \log 2 + 1), & d=1,\\
        -\dfrac14, & d=2.\rule{0mm}{8mm}
    \end{cases}
\end{align*}

\appendix
\section{SG-classical operators on asymptotically Euclidean manifolds}\label{subs:scmf}
We refer to \cites{CDS19,Melrosemwc} for a detailed study of scattering geometry.

\begin{definition}
    An asymptotically Euclidean manifold $(X,g)$ is a compact manifold with boundary $X$,
    whose interior is equipped with a Riemannian metric $g$ that is supposed to take the form, in a tubular neighborhood of the boundary,
    \[g=\frac{d\rho^2}{\rho^4}+\frac{g_\partial}{\rho^2},\]
    where $\rho$ is a boundary defining function and $g_\partial\in\Sm(X,\Sym^2 T^*X)$ restricts to a metric on $\partial X$.
\end{definition}

Under the sterographic projection $\SP : x \mapsto \ang{x}^{-1}(1, x) \in \Sph^d$ we may identify
$\RR^d$ with the interior of $\Sph^d_+ = \{ y = (y_0,\dotsc,y_{d+1}) \colon y_0 \geq 0, |y| = 1\}$. If we set $\rho = |x|^{-1}$, then the Euclidean metric becomes
\begin{align*}
    g \cong \frac{d\rho^2}{\rho^4} + \frac{g_{\Sph^{d-1}}}{\rho^2},
\end{align*}
where $g_{\Sph^{d-1}}$ is the induced metric on the sphere.

For any compact manifold with boundary $X$ with boundary defining function $\rho_X$,
we define the space of scattering vector fields $\scV(x) \coloneqq \rho\,\bV(X)$, where $\bV(X)$ is the space tangential vector fields.
There is natural vector bundle, $\scT X$ such that the sections of $\scT X$ are exactly the scattering vector fields. The dual bundle is the
\emph{scattering cotangent bundle}, $\scT^* X$. Using the fiberwise stereographic projection, we obtain a manifold with corners
$\scOverT^* X$ with boundary defining functions $\rho_X$ and $\rho_\Xi$.

The new-formed fiber boundary may be identified with a rescaling of the cosphere bundle, called $\scS^*X$.
Since $X$ is a compact manifold with boundary, $\scOverT^*X$ is a compact manifold with corners.
The boundary $\Wt$ of $\scOverT^*X$ splits into three components:
$$\We:=\scT^*_{\partial X}X,\qquad \Wp:=\scS^*_{X^o}X,\qquad \Wpe:=\scS^*_{\partial X}X.$$
It can be shown (cf. \cite{CoSc13}) that the $\SG$-classical symbols $\SG^{m_\psi,m_e}_\cl(\RR^d)$ become under this identification $\rho_X^{-m_e}\rho_{\Xi}^{-m_\psi}\Sm(\Sph^d_+\times \Sph^d_+)$.
All the concepts and notions introduced in the previous parts of this section, for the \emph{local model} given by $\RR^d$ and its compactification $\Sph^d_+$, extend to the setting of a general scattering manifold $X$.
%

Melrose--Zworski~\cite{MeZw96} defined for $f \in \rho_X^{-m_e}\rho_\Xi^{-m_\psi} \Sm(\scOverT^*X)$ the Hamiltonian vector field
\begin{align*}
    \scHamvf_f \in \rho_X^{-m_e+1}\rho_\Xi^{-m_\psi+1} \;\bV(\scOverT^*X),
\end{align*}
which generalizes the usual Hamiltonian vector field to the compactified cotangential bundle of asymptotically Euclidean manifolds.

For $f \in \rho_X^{-1}\rho_\Xi^{-1} \Sm(\scOverT^*X)$, the Hamiltonian vector field is tangential to the boundary and hence its flow $\exp(t\,\scHamvf_f)$ can be restricted to a map
\begin{align*}
    \exp(t\,\scHamvf_f)|_\Wt : \Wt \to \Wt
\end{align*}
that preserves the components $\We$, $\Wp$, and $\Wpe$.
Note that the flow $t\mapsto \exp(t\,\scHamvf_f)|_\Wt$ depends only on the principal symbol of $f$.

The propagation of singularities results from \cite{CoMa03} now reads as follows:
\begin{proposition}
    Let $P$ be an elliptic SG-pseudodifferential operator of order $(1,1)$ on an asymptotically Euclidean manifolds $(X,g)$.
    Denote by $\Phi(t) : \Wt \to \Wt$ the Hamiltonian flow associated with the principal symbol of $P$.
    Then
    \begin{align*}
        \WFSG(e^{-itP}u) = \Phi(t) (\WFSG(u)).
    \end{align*}
\end{proposition}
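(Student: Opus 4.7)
The plan is to reduce the statement to the local result on $\RR^d$ (essentially Theorem \ref{thm:sgwfsprop} combined with the short-time parametrix construction in Section \ref{subs:waveq}) by exploiting the identification of an asymptotically Euclidean manifold with $\Sph^d_+$ near the boundary, and then to upgrade the resulting inclusion to an equality via the group property of $U(t)=e^{-itP}$.

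First I would cover $X$ by finitely many charts: a relatively compact interior chart, and boundary charts in which, via the stereographic projection from Section \ref{subs:scmf}, $X$ is identified with $\Sph^d_+$ and the SG-classical symbols on $X$ are identified with $\rho_X^{-m_e}\rho_\Xi^{-m_\psi}\Sm(\Sph^d_+\times\Sph^d_+)$. In the interior chart the operator is a classical pseudodifferential operator and the inclusion $\WFSG^\psi(e^{-itP}u)\subseteq\Phi^\psi(t)(\WFSG^\psi(u))$ is Hörmander's classical propagation theorem. In the boundary charts we are in the situation of Section \ref{sec:sg}, so Theorem \ref{thm:sgwfsprop} applies directly and yields the inclusions for the components $\WFSG^e$ and $\WFSG^\psie$. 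Since the principal symbol of $P$ lies in $\rho_X^{-1}\rho_\Xi^{-1}\Sm(\scOverT^* X)$, the associated Hamiltonian vector field $\scHamvf_{\sigma(P)}$ is tangential to the boundary by the Melrose--Zworski construction recalled above, so $\Phi(t)$ is well-defined on $\Wt$ and preserves each of the three components; this guarantees compatibility of the local statements when patched together with a partition of unity at the operator level, as already done in the proof of Proposition \ref{prop:error}.

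This produces the forward inclusion $\WFSG(e^{-itP}u)\subseteq\Phi(t)(\WFSG(u))$. To obtain the reverse inclusion I would use that $e^{-itP}$ is a one-parameter group of unitaries, so $(e^{-itP})^{-1}=e^{itP}$, and that $\Phi(-t)=\Phi(t)^{-1}$ on $\Wt$ (both facts on the level of flows follow from the ODE generated by the principal symbol). Setting $v=e^{-itP}u$ and applying the forward inclusion with $-t$ in place of $t$ to $v$ gives
\[
\WFSG(u)=\WFSG(e^{itP}v)\subseteq \Phi(-t)(\WFSG(v))=\Phi(t)^{-1}(\WFSG(e^{-itP}u)),
\]
which is equivalent to $\Phi(t)(\WFSG(u))\subseteq\WFSG(e^{-itP}u)$, completing the equality.

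The main obstacle I expect is not the propagation estimate itself, which is already available in the local model, but the careful bookkeeping needed to pass from the $\RR^d$ statement to the invariant statement on $\scOverT^*X$: one must verify that the local Hamiltonian flows in the boundary charts glue to the globally defined flow $\Phi(t)$ on $\Wt$, and that the operator-level partition of unity respects the decomposition of $\WFSG$ into its three components $(\WFSG^\psi,\WFSG^e,\WFSG^\psie)$. Once the identification between SG-classical symbols on $X$ and the conormal/polyhomogeneous symbol space on $\Sph^d_+\times\Sph^d_+$ is used in the form recalled in Section \ref{subs:scmf}, this bookkeeping is essentially functorial, and the result follows.
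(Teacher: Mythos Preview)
The paper does not actually prove this proposition. It appears in the appendix as a restatement, in the invariant language of asymptotically Euclidean manifolds, of the propagation result from \cite{CoMa03}; the sentence introducing it reads ``The propagation of singularities results from \cite{CoMa03} now reads as follows,'' and the subsequent Remark explicitly states that ``the detailed analysis of the extension of such previous results to general scattering manifolds \ldots\ will be illustrated elsewhere.'' So there is no proof in the paper to compare your proposal against.

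That said, your outline is the natural one and is essentially what such a proof would have to do: localize to charts where the $\RR^d$ theory (Theorem~\ref{thm:sgwfsprop}) applies, use H\"ormander's classical propagation on the interior, and patch. Your use of the group property of $U(t)$ and of $\Phi(-t)=\Phi(t)^{-1}$ to upgrade the inclusion of Theorem~\ref{thm:sgwfsprop} to an equality is correct and is the standard way to close the gap between the inclusion stated there and the equality asserted in the proposition. The point you flag as the main obstacle---checking that the locally defined Hamiltonian flows on $\Wt^\bullet$ glue to the globally defined $\Phi(t)$ on $\Wt$ and that microlocal partitions of unity respect the three boundary components---is exactly the content the authors defer to a separate paper, so you have correctly identified where the real work lies.
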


\begin{remark}
	Actually, the results on complex powers, trace operators and spectral asymptotics of $\SG$-classical operators
	have been proved in detail, in \cite{BaCo11} and \cite{CoMa13}, for operators defined on the subclass of manifolds 
	with (cylindrical) ends. In particular, the results about the Cauchy 
	problems for $\SG$-hyperbolic operators of order $(1,1)$ yield there a global parametrix $\widetilde{U}(t)$, 
	locally represented by operators 
	with kernel given in \eqref{eq:utker}, see \cite{CoMa13}. To keep this exposition within a reasonable length, and avoid to
	deviate from our main focus, the detailed analysis of the extension of such previous results to general scattering
	manifolds, as well as the proof of some results on the operator $\SG$-wavefront set, tacitly used above,
	will be illustrated elsewhere.
\end{remark}
\begin{bibdiv}
\begin{biblist}
\bibselect{biblist}
\end{biblist}
\end{bibdiv}
\end{document}